\numberwithin{equation}{section}
\begin{document}

\renewcommand{\theequation}{\thesection.\arabic{equation}}
\setcounter{secnumdepth}{2}
\newtheorem{theorem}{Theorem}[section]
\newtheorem{definition}[theorem]{Definition}
\newtheorem{lemma}[theorem]{Lemma}
\newtheorem{corollary}[theorem]{Corollary}
\newtheorem{proposition}[theorem]{Proposition}
\numberwithin{equation}{section}
\theoremstyle{definition}
\newtheorem{example}[theorem]{Example}
\title[GIT aspect of GK Reduction. I]
{The GIT aspect of generalized K$\ddot{a}$hler reduction. I}

\author[Yicao Wang]{Yicao Wang}
\address
{Department of Mathematics, Hohai University, Nanjing 210098, China}
\thanks{ This study is supported by the Natural Science Foundation of Jiangsu Province, China (BK20150797) and by the China Scholarship Council (201806715027).}
\maketitle

\baselineskip= 20pt
\begin{abstract}We revisit generalized K$\ddot{a}$hler reduction introduced by Lin and Tolman in \cite{LT} from a viewpoint of geometric invariant theory. It is shown that in the strong Hamiltonian case introduced in the present paper, many well-known conclusions of ordinary K$\ddot{a}$hler reduction can be generalized without much effort to the generalized setting. It is also shown how generalized holomorphic structures arise naturally from the reduction procedure.

\end{abstract}
\section{Introduction}
Generalized complex geometry, initiated by N. Hitchin \cite{Hit} and developed in depth by M. Gualtieri \cite{Gu00} \cite{Gu0} \cite{Gu1}, is a simultaneous generalization of complex and symplectic geometries. The development of this new geometry thus benefits greatly from the two well-established disciplines. For example, Marsden-Weinstein's famous symplectic reduction is an important construction in symplectic geometry and has been successfully generalized to the generalized complex setting by several authors \cite{BCG1} \cite{LT}; in particular, generalized complex reduction introduced in \cite{LT} is precisely the analogue of symplectic reduction.

Symplectic reduction, when applied to Hamiltonian equivariant K$\ddot{a}$hler manifolds, is usually referred to as K$\ddot{a}$hler reduction. Besides the ordinary symplectic content of this procedure, complex structures also play a remarkable role; in particular, in good cases there is the well-known theorem of Kempf-Ness type stating that the symplectic quotient coincides with the complex quotient in the sense of geometric invariant theory (GIT for short) developed by Mumford. The philosophy of K$\ddot{a}$hler reduction has even been successfully applied formally to infinite-dimensional cases and the Kempf-Ness theorem also has its counterpart--the Kobayashi-Hitchin correspondence.

In the generalized complex setting, the analogue of K$\ddot{a}$hler manifolds are generalized K$\ddot{a}$hler manifolds, each consisting of two commuting generalized complex structures $\mathbb{J}_1$ and $\mathbb{J}_2$. The relevant generalized K$\ddot{a}$hler reduction was investigated in \cite{BCG1} \cite{BCG2} and in \cite{LT} independently. Lin-Tolman's work \cite{LT} is much more in the original spirit of symplectic reduction. However, in \cite{LT} attention was mainly put on one single generalized complex structure $\mathbb{J}_2$ which plays a role as a symplectic structure does in K$\ddot{a}$hler reduction. The effect of the other generalized complex structure $\mathbb{J}_1$ is thus not very clear. In contrast with K$\ddot{a}$hler reduction, it is conjectured that $\mathbb{J}_1$ should act like a complex structure in K$\ddot{a}$hler reduction and there should be an analogue of GIT quotient. Our goal in this paper is basicly to investigate this GIT aspect of generalized K$\ddot{a}$hler reduction.

However, to carry this idea out, we should complexify the underlying action of a compact Lie group $G$ first. There do exist two ordinary complex structures $J_\pm$ on the generalized K$\ddot{a}$hler manifold stemming from the bihermitian description, but they are not suitable for this attempt. Motivated by our central Lemma~\ref{lemma}, we use the generalized complex structure $\mathbb{J}_1$ to complexify the $G$-action. However, this procedure doesn't apply to general Hamiltonian generalized K$\ddot{a}$hler manifolds. To circumvent this difficulty we introduce the notion of \emph{strong Hamiltonian action of a compact Lie group}. For the strong Hamiltonian case, at least in the case that $M$ is compact and $G$ acts locally freely on the zero locus of the moment map, we can really prove that the main results of GIT do hold in this more general setting.

The paper is organized as follows: In $\S$~\ref{Kah}, we recall the basics of K$\ddot{a}$hler reduction to motivate our later investigation. In $\S$~\ref{gg} we collect the most relevant material on generalized geometry. In particular, a very brief review of Courant reduction of \cite{BCG1} is included. Our study really starts from $\S$~\ref{Hami}, in which we emphasize that our Lemma~\ref{lemma} is actually the generalized K$\ddot{a}$hler analogue of the basic formula Eq.~(\ref{grad}) in equivariant K$\ddot{a}$hler geometry. This motivates our attempt to complexify the group action at the infinitesimal level. We find the Poisson structure $\beta_1$ (which is zero in the K$\ddot{a}$hler case) associated to $\mathbb{J}_1$ is a basic obstruction for this complexification. In $\S$~\ref{stro}, we introduce the notion of strong Hamiltonian action of a compact Lie group. Several nontrivial examples are given. After that, in the compact case, we prove that the infinitesimal action can be integrated to a global one (Thm.~\ref{inte}). The goal of $\S$~\ref{GIT} is basically to establish a theorem of Kempf-Ness type in the strong Hamiltonian case, which claims that two kinds of generalized complex quotients are actually the same (Thm.~\ref{kempf} and Thm.~\ref{kempf2}). Our investigation follows closely the ideas of Kirwan in \cite{Kir}. In the same section, we also prove that a generalized holomorphic principal bundle arises naturally from the reduction procedure (Thm.~\ref{ghs}). This provides an answer to a question posed by the author in \cite{Wang2}, which suggests the possibility of constructing generalized holomorphic structures from generalized K$\ddot{a}$hler reduction. The last section $\S$~\ref{orbit} mainly contains a theorem (Thm.~\ref{Ob}) stating that a (stable) orbit of the complexified group $G^\mathbb{C}$ carries a natural structure of Hamiltonian $G$-K$\ddot{a}$hler manifold.

For simplicity, in the present paper we mainly concentrate on the special case that the group $G$ acts (locally) freely on the zero locus of the moment map. There is certainly a more general story, which we leave for a future work.
\section{Reflections on K$\ddot{a}$hler reduction}\label{Kah}
To motivate our later considerations, in this section we briefly review the ordinary K$\ddot{a}$hler reduction in equivariant K$\ddot{a}$hler geometry. We refer the interested readers to F. Kirwan's book \cite{Kir} for more details.

Let $G$ be a compact connected Lie group (with Lie algebra $\mathfrak{g}$) acting holomorphically on a K$\ddot{a}$hler manifold $(M, g, J)$ from the left in a Hamiltonian fashion. This means that $G$ preserves the K$\ddot{a}$hler form $\omega=gJ$ and there is an equivariant map $\mu: M\rightarrow \mathfrak{g^*}$ such that $d\mu_\varsigma=-\iota_{X_\varsigma}\omega$ where $\varsigma\in \mathfrak{g}$ and $X_\varsigma$ is the vector field on $M$ generated by $\varsigma$.

There is a unique complexification $G^\mathbb{C}$ of $G$ such that its Lie algebra is the complexification of $\mathfrak{g}$, i.e. $\mathfrak{g}_\mathbb{C}=\mathfrak{g}+\sqrt{-1}\mathfrak{g}$.\footnote{In this paper, $V_\mathbb{C}$ is used to denote the complexification of a real vector space or vector bundle $V$.} The $G$-action on $M$ can be extended to a $G^\mathbb{C}$-action under suitable conditions (e.g. $M$ is compact). To achieve this, at the infinitesimal level one simply uses $JX_\varsigma$ as the infinitesimal action generated by $\sqrt{-1}\varsigma$ for $\varsigma\in \mathfrak{g}$, which also preserves $J$.

If $0$ is a regular value of $\mu$ and $G$ acts freely on $\mu^{-1}(0)$, then the quotient $\mu^{-1}(0)/G$ acquires a symplectic structure by Marsden-Weinstein reduction. The complex structure $J$ also descends to this quotient and makes it a K$\ddot{a}$hler manifold. There is another way to view the reduced complex structure: $G^\mathbb{C}$ acts freely and holomorphically on the open set $M_s=G^\mathbb{C} \mu^{-1}(0)$, and thus the quotient $M_s/G^\mathbb{C}$ is complex in the natural manner. In this case, the Kempf-Ness theorem says that the two quotient actually coincide. \emph{To establish this coincidence, a central observation is that, while $X_\varsigma$ is the Hamiltonian vector field associated to $\mu_\varsigma$, $JX_\varsigma$ is minus the gradient vector field associated to $\mu_\varsigma$}, i.e.
\begin{equation}JX_\varsigma=-g^{-1}d\mu_\varsigma.\label{grad}\end{equation}

 Another fact, which is seldom mentioned in the literature, is that it is almost trivial that $M_s$, as a principal $G^\mathbb{C}$-bundle over $\mu^{-1}(0)/G$, carries a holomorphic structure. However, in the generalized setting, nontrivial generalized holomorphic structures are not easy to construct. So in our later investigation we will pay some attention to finding out whether generalized holomorphic structures arise in a similar manner.
\section{Basics of generalized complex geometry}\label{gg}
\subsection{Courant algebroids and their symmetries }\label{gr}We recall some backgrounds of Courant algebroids and extended actions of Lie groups on a Courant algebroid. Our basic reference is \cite{BCG1}.

Generalized geometry arises from the idea of replacing the tangent bundle $T$ of a manifold $M$ by the direct sum $\mathbb{T}$ of $T$ and its dual $T^*$, or, more generally, by an exact Courant algebroid.

A Courant algebroid $E$ is a real vector bundle $E$ over $M$, together with an anchor map $\pi$ to $T$,\footnote{Throughout the paper there are different Courant algebroids, but we always denote the anchor map by $\pi$. The context will exclude ambiguities.} a non-degenerate inner product $(\cdot, \cdot)$ and a so-called Courant bracket $[\cdot , \cdot]_c$ on $\Gamma(E)$. These structures should satisfy some compatibility axioms we won't mention here. $E$ is called exact, if the short sequence \[0\longrightarrow T^*\stackrel{\pi^*}\longrightarrow E \stackrel{\pi}\longrightarrow T \longrightarrow0\]
is exact. In this paper, by "Courant algebroid", we always mean an exact one. Given $E$, one can always find an isotropic right splitting $s:T\rightarrow E$, with a curvature form $H\in \Omega_{cl}^3(M)$ defined by
\[H(X,Y,Z)=([s(X),s(Y)]_c,s(Z)),\quad X, Y, Z\in \Gamma(T).\]
  By the bundle isomorphism $s+\pi^*:T\oplus T^*\rightarrow E$, the Courant algebroid structure can be transported onto $\mathbb{T}$. Then the inner product $(\cdot,\cdot)$ is the natural pairing, i.e.
$( X+\xi,Y+\eta)=\xi(Y)+\eta(X)$, and the Courant bracket is
\begin{equation}[X+\xi, Y+\eta]_H=[X,Y]+\mathcal{L}_X\eta-\iota_Yd\xi+\iota_Y\iota_XH,\label{Courant}\end{equation}
called the $H$-twisted Courant bracket. This bracket is not skew-symmetric, actually
\begin{equation}[X+\xi, Y+\eta]_H+[Y+\eta, X+\xi]_H=d(\xi(Y)+\eta(X)).\label{nonsk}\end{equation}
Different splittings are related by $B$-field transforms, i.e. $e^B(X+\xi)=X+\xi+\iota_XB$, where $B$ is a 2-form.

A Courant algebroid has more symmetries than the tangent bundle $T$. In a given splitting, an automorphism of $E$ is represented by a pair $(\psi, B)$, where $\psi$ is a diffeomorphism of $M$ and $B$ is a 2-form on $M$. These two should satisfy $H-\psi^*(H)=dB$.
The pair acts on $Y+\eta\in \Gamma(E)$ in the following manner: \[(\psi, B)\cdot (Y+\eta)=\psi_*^{-1}(Y)+\psi^*(\eta-\iota_YB).\] An infinitesimal automorphism is consequently a pair $(X, B)$, where $X$ is a vector field on $M$ and $B$ is a 2-form, satisfying $L_XH=-dB$. $(X, B)$ acts on $Y+\eta\in \Gamma(E)$ as follows: \[(X, B)\cdot (Y+\eta)=L_X(Y+\eta)-\iota_YB.\]
Especially, $X+\xi\in \Gamma(E)$ generates an infinitesimal inner automorphism $(X, d\xi-\iota_XH)$ through the Courant bracket (\ref{Courant}).

Let $G$ be a connected Lie group acting on $M$ from the left. Then the infinitesimal action $\varphi_0:\mathfrak{g}\rightarrow \Gamma(T)$ is a Lie algebra homomorphism. Since in generalized geometry $T$ is replaced by a Courant algebroid $E$, we would like to lift the $\mathfrak{g}$-action to $E$.
\begin{definition}A map $\varphi: \mathfrak{g}\rightarrow \Gamma(E)$ covering $\varphi_0$ is called an isotropic trivially extended $\mathfrak{g}$-action if (i) $\varphi$ is isotropic,i.e. the image of $\varphi$ is isotropic pointwise w.r.t. the inner product and (ii) $\varphi$ preserves the brackets, i.e. $\varphi([\varsigma,\zeta])=[\varphi(\varsigma), \varphi(\zeta)]_c$ for $\varsigma, \zeta\in \mathfrak{g}$.
\end{definition}
 Infinitesimal inner automorphisms of $E$ induced by $\varphi(\mathfrak{g})$ through the Courant bracket form a Lie algebra action of $\mathfrak{g}$ on the total space of $E$.\footnote{This still holds even without (i) because $[A,B]_c+[B,A]_c$ is an exact 1-form, which has no effect at the level of infinitesimal inner automorphism of $E$. The importance to include (i) in the definition will be clear later.} If this action integrates to a $G$-action, we shall call it an isotropic trivially extended $G$-action. There is a fairly general theory of extended $G$-action in \cite{BCG1}. However, we don't need this generality. \emph{In the remainder of the paper, when referring to an extended $\mathfrak{g}$-action ($G$-action), we always mean an isotropic trivially extended one.} Additionally, to simplify notation, $\varphi$ will be used to denote either an extended $\mathfrak{g}$-action or an extended $G$-action.

Given a splitting of $E$ preserved by the extended action, the extended $\mathfrak{g}$-action can be written in the form $\varphi(\varsigma)=X_\varsigma+\xi_\varsigma$, where $X_\varsigma$ is the vector field generated by $\varsigma$ and $\xi_{(,)}:\mathfrak{g}\rightarrow \Omega^1(M)$ is a $\mathfrak{g}$-equivariant map such that
\begin{equation}\xi_\varsigma(X_\zeta)+\xi_\zeta(X_\varsigma)=0,\quad d\xi_\varsigma=\iota_{X_\varsigma}H.\end{equation}

If the underlying $G$-action on $M$ of an extended $G$-action is proper and free, the Courant algebroid $E$ descends to the quotient $M/G$. In fact, let $K$ be the subbundle of $E$ generated by the image of $\varphi$ and $K^\bot\subset E$ the orthogonal of $K$ w.r.t. the inner product. Then $K\subset K^\bot$ and we can obtain a Courant algebroid $E_{red}:=\frac{K^\bot}{K}/G$ whose Courant bracket can be derived from the Courant bracket of $G$-invariant sections of $E$. If $\mathcal{D}\subset E_\mathbb{C}$ is an involutive isotropic subbundle, then $\mathcal{D}$ also descends to the quotient under good conditions, e.g. $\mathcal{D}\cap K_\mathbb{C}$ has constant rank. The reduced version of $\mathcal{D}$ is $\frac{\mathcal{D}\cap K_\mathbb{C}^\bot+K_\mathbb{C}}{K_\mathbb{C}}/G$. Of particular interest for us is the case of Dirac structures, involutive maximal isotropic subbundles of $E_\mathbb{C}$.
\subsection{Generalized holomorphic and generalized K$\ddot{a}$hler structures }
\begin{definition}
 A generalized complex structure on a Courant algebroid $E$ is a complex structure $\mathbb{J}$ on $E$ orthogonal w.r.t. the inner product and its $\sqrt{-1}$-eigenbundle $L\subset E_\mathbb{C}$ is involutive under the Courant bracket. We also say $\mathbb{J}$ is integrable in this case.
\end{definition}
Ordinary complex and symplectic structures are extreme examples of generalized complex structures. Note that the integrability of $\mathbb{J}$ is equivalent to
\begin{equation}[\mathbb{J} A, \mathbb{J} B]_c=\mathbb{J}[\mathbb{J} A, B]_c+\mathbb{J}[A, \mathbb{J} B]_c+[A, B]_c,\label{inteJ}\end{equation}
for any $A, B\in \Gamma(E)$. Since $\mathbb{J}$ and its $\sqrt{-1}$-eigenbundle $L$ are equivalent notions, we shall use them interchangeably to denote a generalized complex structure. A generalized complex structure $L$ is an example of complex Lie algebroids. Via the inner product, $\wedge^\cdot L^*$ can be identified with $\wedge^\cdot \bar{L}$, and we have an elliptic differential complex $(\Gamma(\wedge^\cdot \bar{L}), d_L)$, inducing the Lie algebroid cohomology associated with the Lie algebroid $L$. The differential complex can be further twisted by a generalized holomorphic vector bundle $F$, which is a complex vector bundle equipped with a first-order differential operator $\bar{\partial}: \Gamma(F)\rightarrow \Gamma(\bar{L}\otimes F)$ such that for any smooth function $f$ and $s\in \Gamma(F)$
\[\bar{\partial}(fs)=d_L f\otimes s+f\bar{\partial}s\]
 and $\bar{\partial}^2=0$. This notion of generalized holomorphic structures has been generalized to the setting of (generalized) principal bundles by the author in \cite{Wang1}.

\begin{definition}
Let $G$ be a Lie group. A generalized principal $G$-bundle over $M$ is a triple $(\mathbf{P}, \mathbf{E}, \mathbf{\varphi})$ such that (i) $p:\mathbf{P}\rightarrow M$ is an ordinary principal $G$-bundle, and (ii) $\mathbf{E}$ is a Courant algebroid over $\mathbf{P}$ and $\mathbf{\varphi}$ an extended $G$-action on $\mathbf{E}$.
\end{definition}
Note that by the reduction mentioned in \S~\ref{gr}, there is the Courant algebroid $E=\frac{K^\bot}{K}/G$ on the base manifold $M$, descending from $\mathbf{E}$. In the following, let $Q$ be a complex Lie group with Lie algebra $\mathfrak{q}$ and $(\mathbf{P}, \mathbf{E}, \mathbf{\varphi})$ a generalized principal $Q$-bundle. Then we have a decomposition $\mathfrak{q}_\mathbb{C}=\mathfrak{q}_h\oplus \mathfrak{q}_a$, where $\mathfrak{q}_h$ is the holomorphic part and $\mathfrak{q}_a$ its complex conjugate. Denote $K_a$ the sunbundle generated by $\varphi(\mathfrak{q}_a)$.
\begin{definition}\label{ghp}
Let $(\mathbf{P}, \mathbf{E}, \varphi)$ be a generalized principal $Q$-bundle over $M$ and $\mathcal{A}\subset \mathbf{E}_\mathbb{C}$ a $Q$-invariant isotropic subbundle such that (i) $K_a\subset \mathcal{A}\subset K^\bot_\mathbb{C}$, (ii) $\mathcal{A}\oplus\overline{\mathcal{A}}=K^\bot_\mathbb{C}$, and (iii) $\mathcal{A}$ descends to a generalized complex structure $L$ in $E$ in the sense of the reduction theory mentioned in \S~\ref{gr}. Then $\mathcal{A}$ is called an almost generalized holomorphic structure w.r.t. $L$. If furthermore $\mathcal{A}$ is integrable under the Courant bracket, it is called a generalized holomorphic structure.
\end{definition}
As was proved in \cite{Wang1}, given a generalized holomorphic principal $Q$-bundle, any associated vector bundle of a holomorphic representation of $Q$ is generalized holomorphic in the manner that associated vector bundles of a holomorphic principal $Q$-bundle are holomorphic.

\begin{definition}A generalized metric on a Courant algebroid $E$ is an orthogonal, self-adjoint operator $\mathcal{G}$ such that $( \mathcal{G}e,e) > 0$ for nonzero $e\in E$. It is necessary that $\mathcal{G}^2=id$.
 \end{definition}
 A generalized metric induces a canonical isotropic splitting: $E=\mathcal{G}(T^*)\oplus T^*$. It is called \emph{the metric splitting}. Given a generalized metric, we shall always choose the metric splitting to identify $E$ with $\mathbb{T}$. Then $\mathcal{G}$ is of the form $\left(\begin{array}{cc} 0 & g^{-1} \\g & 0 \\
\end{array} \right)$ where $g$ is an ordinary Riemannian metric.

  A generalized metric is an ingredient of a generalized K$\ddot{a}$hler structure, which is the analogue of K$\ddot{a}$hler structures in complex geometry.
\begin{definition}
A generalized K$\ddot{a}$hler structure on $E$ is a pair of commuting generalized complex structures $(\mathbb{J}_1,\mathbb{J}_2)$ such that $\mathcal{G}=-\mathbb{J}_1 \mathbb{J}_2$ is a generalized metric.
\end{definition}

If necessary, we will use $L_i$ to denote the corresponding $\sqrt{-1}$-eigenbundle of $\mathbb{J}_i$, $i=1,2$. A generalized K$\ddot{a}$hler structure can also be characterized in terms of more ordinary concepts: There are two complex structures $J_\pm$ on $M$ compatible with the metric $g$ induced from the generalized metric. Let $\omega_\pm=gJ_\pm$. Then in the metric splitting the generalized complex structures and the bihermitian data are related by the Gualtieri map:
\[\mathbb{J}_1=\frac{1}{2}\left(
  \begin{array}{cc}
    -J_+-J_-& \omega_+^{-1}-\omega_-^{-1} \\
    -\omega_++\omega_- & J_+^*+J_-^* \\
  \end{array}
\right),\quad \mathbb{J}_2=\frac{1}{2}\left(
  \begin{array}{cc}
    -J_++J_-& \omega_+^{-1}+\omega_-^{-1} \\
    -\omega_+-\omega_- & J_+^*-J_-^* \\
  \end{array}
\right).\]
Let $\beta_1:=-\frac{1}{2}(J_+-J_-)g^{-1}$ and $\beta_2:=-\frac{1}{2}(J_++J_-)g^{-1}$. These are actually Poisson structures associated to $\mathbb{J}_{1}$ and $\mathbb{J}_{2}$ respectively.
\section{Hamiltonian Generalized K$\ddot{a}$hler manifolds}\label{Hami}
We first recall the generalized K$\ddot{a}$hler reduction procedure developed in \cite{LT}. Our formulation here is, however, greatly influenced by the works in \cite{BCG1} \cite{BCG2} \cite{Ca}. In particular, we stick to the metric splitting, which exists naturally on a generalized K$\ddot{a}$hler manifold $M$.

Let a compact connected Lie group $G$ act on $M$ from the left in the extended manner, preserving the generalized K$\ddot{a}$hler structure on $M$ and consequently the metric splitting. The notion of (generalized) moment map can be defined in the context of generalized complex manifolds. However, a generalized K$\ddot{a}$hler manifold consists of two generalized complex structures $(\mathbb{J}_1, \mathbb{J}_2)$. As a convention, when referring to a moment map, we always mean it is associated to $\mathbb{J}_2$.
\begin{definition}Let $M$ be a generalized K$\ddot{a}$hler manifold carrying an extended $G$-action $\varphi$ preserving the underlying generalized K$\ddot{a}$hler structure. An equivariant map $\mu: M\rightarrow \mathfrak{g}^*$ is called a moment map, if
\begin{equation}\mathbb{J}_2(X_\varsigma+\xi_\varsigma-\sqrt{-1}d\mu_\varsigma)=\sqrt{-1}(X_\varsigma+\xi_\varsigma-\sqrt{-1}d\mu_\varsigma)\label{gmm}\end{equation}
for any $\varsigma\in \mathfrak{g}$. If this happens, we call $M$ a Hamiltonian $G$-generalized K$\ddot{a}$hler manifold.
\end{definition}
In contrast with ordinary K$\ddot{a}$hler reduction, $\mathbb{J}_2$ plays the role of a symplectic structure in K$\ddot{a}$hler reduction and it is expected that $\mathbb{J}_1$ should act like a complex structure in K$\ddot{a}$hler reduction. Note that Eq.~(\ref{gmm}) is precisely
\begin{equation}\mathbb{J}_2(X_\varsigma+\xi_\varsigma)=d\mu_\varsigma.\label{gmm2}\end{equation}

The following algebraic calculation actually has already appeared in \cite{BL} \cite{Ni}. We include it here only because it provides some motivations for our later considerations.
\begin{lemma}\label{lemma}
In terms of the bihermitian data, Eq.~(\ref{gmm}) is equivalent to the following two equations:
\begin{equation}
J_+X_\varsigma^+=J_-X_\varsigma^-=-g^{-1}d\mu_\varsigma,\label{cent}
\end{equation}
where $X_\varsigma^\pm=X_\varsigma\pm g^{-1}\xi_\varsigma$,
\end{lemma}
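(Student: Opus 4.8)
The plan is to unwind Eq.~(\ref{gmm2}) directly using the explicit matrix form of $\mathbb{J}_2$ furnished by the Gualtieri map, and to match the two component equations (the $T$-part and the $T^*$-part) against the claimed identity (\ref{cent}). First I would write the extended generator as $X_\varsigma+\xi_\varsigma\in\Gamma(E)\cong\Gamma(T\oplus T^*)$ in the metric splitting, so that applying $\mathbb{J}_2$ means multiplying the column vector $\binom{X_\varsigma}{\xi_\varsigma}$ by
\[
\mathbb{J}_2=\frac{1}{2}\left(
  \begin{array}{cc}
    -J_++J_- & \omega_+^{-1}+\omega_-^{-1} \\
    -\omega_+-\omega_- & J_+^*-J_-^* \\
  \end{array}
\right),
\]
and then equating the result to $d\mu_\varsigma=0+d\mu_\varsigma\in T\oplus T^*$, whose $T$-component vanishes. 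This yields two equations: a $T$-valued one coming from the top row and a $T^*$-valued one from the bottom row. The key bookkeeping is to use $\omega_\pm=gJ_\pm$, hence $\omega_\pm^{-1}=-J_\pm g^{-1}$ and $\omega_\pm^{-1}=J_\pm^{-1}g^{-1}$ up to the sign conventions of the paper, together with the metric-adjoint relations $J_\pm^*=gJ_\pm g^{-1}$ (the $J_\pm$ are $g$-orthogonal). I would keep the combinations $X_\varsigma^\pm=X_\varsigma\pm g^{-1}\xi_\varsigma$ in mind from the outset, since the matrix mixes $J_+,J_-$ symmetrically and antisymmetrically exactly in the pattern that these combinations diagonalize.

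Concretely, the top-row equation reads $\tfrac12\bigl(-J_++J_-\bigr)X_\varsigma+\tfrac12\bigl(\omega_+^{-1}+\omega_-^{-1}\bigr)\xi_\varsigma=0$, and the bottom-row equation reads $\tfrac12\bigl(-\omega_+-\omega_-\bigr)X_\varsigma+\tfrac12\bigl(J_+^*-J_-^*\bigr)\xi_\varsigma=d\mu_\varsigma$. I expect that after substituting $\omega_\pm^{-1}=-J_\pm g^{-1}$ into the first equation and regrouping, the terms reorganize into $J_+X_\varsigma^+-J_-X_\varsigma^-=0$ (this is the statement $J_+X_\varsigma^+=J_-X_\varsigma^-$), while the second equation, after using $J_\pm^*=gJ_\pm g^{-1}$ and $\omega_\pm=gJ_\pm$ and multiplying through by $g^{-1}$ on the left, collapses to $-\tfrac12\bigl(J_+X_\varsigma^++J_-X_\varsigma^-\bigr)=g^{-1}d\mu_\varsigma$. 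Combining the two then forces $J_+X_\varsigma^+=J_-X_\varsigma^-=-g^{-1}d\mu_\varsigma$, which is exactly (\ref{cent}). The converse direction is obtained by reading the same algebraic chain backwards; since every manipulation is an equivalence (the substitutions for $\omega_\pm^{-1}$, $J_\pm^*$ are invertible and $g$ is nondegenerate), no extra argument is needed beyond noting reversibility.

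The main obstacle I anticipate is purely one of sign and convention discipline rather than conceptual difficulty: one must fix once and for all whether $\omega_\pm=gJ_\pm$ gives $\omega_\pm^{-1}=-J_\pm g^{-1}$ or $+J_\pm g^{-1}$, and similarly pin down the adjoint $J_\pm^*$ relative to the pairing $(X+\xi,Y+\eta)=\xi(Y)+\eta(X)$, because a single misplaced sign will turn (\ref{cent}) into a false statement. I would therefore first record the precise identities $\omega_\pm^{-1}=-J_\pm g^{-1}$, $J_\pm^{-1}=-J_\pm$, and $J_\pm^* = g J_\pm g^{-1}$ as a short preliminary, verify them against the definition $\omega_\pm=gJ_\pm$, and only then feed them into the two row-equations. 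Everything downstream is then a finite, mechanical regrouping, and the equivalence (rather than mere implication) is automatic because each substitution is reversible.
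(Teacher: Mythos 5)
Your proposal is correct and follows essentially the same route as the paper's proof: expand $\mathbb{J}_2(X_\varsigma+\xi_\varsigma)=d\mu_\varsigma$ into its $T$- and $T^*$-components via the Gualtieri matrix, substitute $\omega_\pm^{-1}=-J_\pm g^{-1}$, recognize the combinations $X_\varsigma^\pm$, and feed the top-row identity $J_+X_\varsigma^+=J_-X_\varsigma^-$ into the bottom row. One caution on the sign you promise to pin down: with the paper's conventions (where $J_\pm^*$ is the transpose, as forced by orthogonality of $\mathbb{J}_2$ for the pairing $(X+\xi,Y+\eta)=\xi(Y)+\eta(X)$) the correct identity is $g^{-1}J_\pm^*\eta=-J_\pm g^{-1}\eta$, i.e.\ $J_\pm^*=-gJ_\pm g^{-1}$ rather than $+gJ_\pm g^{-1}$ as you recorded --- with your sign the bottom row would regroup to $J_+X_\varsigma^-+J_-X_\varsigma^+$ instead of the $J_+X_\varsigma^++J_-X_\varsigma^-$ you (correctly) state, so your final display only follows once that sign is fixed.
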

\begin{proof}
The equation \[\mathbb{J}_2\left(
         \begin{array}{c}
           X_\varsigma \\
           \xi_\varsigma \\
         \end{array}
       \right)=\frac{1}{2}\left(
  \begin{array}{cc}
    -J_++J_-& \omega_+^{-1}+\omega_-^{-1} \\
    -\omega_+-\omega_- & J_+^*-J_-^* \\
  \end{array}
\right)\left(
         \begin{array}{c}
           X_\varsigma \\
           \xi_\varsigma \\
         \end{array}
       \right)=\left(
                 \begin{array}{c}
                   0 \\
                   d\mu_\varsigma \\
                 \end{array}
               \right)
\]
 written in components is
 \[(-J_++J_-)X_\varsigma-(J_++J_-)g^{-1}\xi_\varsigma=0,\]
and
\[-(\omega_+(X_\varsigma)+\omega_-(X_\varsigma))+(J_+^*-J_-^*)\xi_\varsigma=2d\mu_\varsigma.\]
The first is precisely $J_+X_\varsigma^+=J_-X_\varsigma^-$. Substituting this into the second leads to $J_+X_\varsigma^+=-g^{-1}d\mu_\varsigma$.
\end{proof}
From the lemma, we have $X_\varsigma=-\beta_2(d\mu_\varsigma)$ and $g^{-1}\xi_\varsigma=-\beta_1(d\mu_\varsigma)$. That's to say, $X_\varsigma$ and $g^{-1}\xi_\varsigma$ are precisely the Hamiltonian vector field of $\mu_\varsigma$ w.r.t. $\beta_1$ and $\beta_2$ respectively. The lemma, together with Eq.~(\ref{gmm2}), is precisely the analogue of the fact in Hamiltonian equivariant K$\ddot{a}$hler geometry that $X_\varsigma$ is the Hamiltonian vector field of $\mu_\varsigma$ and $JX_\varsigma$ is minus the gradient vector field of $\mu_\varsigma$. \emph{For later convenience, we denote $J_+X_\varsigma^+$ by $Y_\varsigma$.}

If $G$ acts freely on $\mu^{-1}(0)$, then $\mu^{-1}(0)/G$ acquires a generalized K$\ddot{a}$hler structure through the general reduction theory. Let $K_0$ be the subbundle of $E|_{\mu^{-1}(0)}$ generated by $\varphi(\mathfrak{g})$ and $d\mu_\varsigma$, $\varsigma\in \mathfrak{g}$. Then $K_0$ is isotropic and $\frac{K_0^\bot}{K_0}/G$ is the reduced Courant algebroid over $\mu^{-1}(0)/G$. Since $K_0$ is $\mathbb{J}_2$-invariant, there is a natural complex structure $\mathbb{J}_2^{red}$ on $\frac{K_0^\bot}{K_0}/G$. This is the reduced version of $\mathbb{J}_2$. There is another way to see clearly how the two generalized complex structures descend simultaneously in a compatible way: $\frac{K_0^\bot}{K_0}$ is naturally isomorphic to $K_0^\bot\cap \mathcal{G}K_0^\bot$, where $\mathcal{G}=-\mathbb{J}_1\mathbb{J}_2$. Since the latter is both $\mathcal{G}$-invariant and $\mathbb{J}_2$-invariant, it is also $\mathbb{J}_1$-invariant and therefore one just has to restrict $\mathbb{J}_1$ and $\mathbb{J}_2$ to $K_0^\bot\cap \mathcal{G}K_0^\bot$ to find the reduced structures $\mathbb{J}_1^{red}$ and $\mathbb{J}_2^{red}$.

Now we turn to the problem of complexifying the extended $G$-action on $M$. At the infinitesimal level, the naive choices $J_\pm X_\varsigma$ motivated by the K$\ddot{a}$hler case won't work because generally $J_+X_\varsigma\neq J_-X_\varsigma$. But the above lemma provides an alternative choice: We can use $Y_\varsigma=J_+X_\varsigma^+$ ($=J_-X_\varsigma^-$) instead of $J_+X_\varsigma$ or $J_-X_\varsigma$. This is justified by the fact that
\begin{equation}\mathbb{J}_1 (X_\varsigma+\xi_\varsigma)=-\mathbb{J}_1\mathbb{J}_2 d\mu_\varsigma=g^{-1}d\mu_\varsigma=-J_+X_\varsigma^+=-Y_\varsigma.\end{equation}
\begin{definition}The map $\varphi_{\mathbb{C}}: \mathfrak{g}_{\mathbb{C}}\rightarrow \Gamma(E)$ defined by
\begin{equation}\varphi_{\mathbb{C}}(u+\sqrt{-1}v)=\varphi(u)-\mathbb{J}_1\varphi(v)\label{comp}\end{equation}
is called the pre-complexification of $\varphi$.
\end{definition}
\emph{Remark}. The minus sign in (\ref{comp}) is to keep accordance with the specified case of Hamiltonian equivariant K$\ddot{a}$hler manifolds where $J_+=J_-$.
\begin{proposition}\label{pre}The pre-complexification $\varphi_\mathbb{C}$ of $\varphi$ defined above preserves $\mathbb{J}_1$.
\end{proposition}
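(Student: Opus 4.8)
The plan is to first unwind what it means for a section of $E$ to \emph{preserve} $\mathbb{J}_1$, and then to reduce the claim to the integrability of $\mathbb{J}_1$. Recall from \S~\ref{gr} that a section $A\in\Gamma(E)$ generates an infinitesimal inner automorphism whose action on sections is exactly the adjoint action $[A,\cdot]_c$ of the Courant bracket. Accordingly I would take ``$A$ preserves $\mathbb{J}_1$'' to mean that this generalized Lie derivative commutes with $\mathbb{J}_1$, i.e.
\begin{equation}
[A,\mathbb{J}_1 B]_c=\mathbb{J}_1[A,B]_c\qquad\text{for all }B\in\Gamma(E).\label{preseqn}
\end{equation}
Because the Courant bracket is $\mathbb{R}$-bilinear in its two arguments, the set of sections satisfying (\ref{preseqn}) is a real-linear subspace of $\Gamma(E)$; this is the only structural fact about the bracket I will need.

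Given $u+\sqrt{-1}v\in\mathfrak{g}_\mathbb{C}$, the definition gives $\varphi_\mathbb{C}(u+\sqrt{-1}v)=\varphi(u)-\mathbb{J}_1\varphi(v)$, so by the linearity just noted it suffices to show that each of the two summands satisfies (\ref{preseqn}). The first term $\varphi(u)$ does so by hypothesis, since $G$ acts preserving the entire generalized K\"ahler structure, and $\varphi(u)\in\varphi(\mathfrak{g})$. Hence the whole problem collapses to one assertion: \emph{if a section $A$ preserves $\mathbb{J}_1$, then so does $\mathbb{J}_1 A$}; applying this to $A=\varphi(v)$ then finishes the proof.

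To establish that assertion I would invoke the integrability identity (\ref{inteJ}) with the pair $(A,C)$, namely
\[
[\mathbb{J}_1 A,\mathbb{J}_1 C]_c=\mathbb{J}_1[\mathbb{J}_1 A,C]_c+\mathbb{J}_1[A,\mathbb{J}_1 C]_c+[A,C]_c.
\]
Rearranging, the deviation of $\mathbb{J}_1 A$ from satisfying (\ref{preseqn}) is precisely $[\mathbb{J}_1 A,\mathbb{J}_1 C]_c-\mathbb{J}_1[\mathbb{J}_1 A,C]_c=\mathbb{J}_1[A,\mathbb{J}_1 C]_c+[A,C]_c$. Feeding in the hypothesis that $A$ preserves $\mathbb{J}_1$, I replace $[A,\mathbb{J}_1 C]_c$ by $\mathbb{J}_1[A,C]_c$; using $\mathbb{J}_1^2=-\mathrm{id}$ the right-hand side then telescopes to $-[A,C]_c+[A,C]_c=0$, which is exactly (\ref{preseqn}) for $\mathbb{J}_1 A$.

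The computation itself is very short, so the real content lies in the conceptual setup rather than in any calculation. The step I expect to require the most care is pinning down the correct meaning of ``preserving $\mathbb{J}_1$'' as the commutation relation (\ref{preseqn}), and recognizing that it is the integrability of $\mathbb{J}_1$ — and not the moment map equation or any feature of the bihermitian data — that forces $\mathbb{J}_1$ to send $\mathbb{J}_1$-preserving sections to $\mathbb{J}_1$-preserving sections. I would also flag the linearity remark for a second look, since the Courant bracket is \emph{not} $C^\infty(M)$-linear in its first slot; but only $\mathbb{R}$-bilinearity is used, and that is all the argument needs.
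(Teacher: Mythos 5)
Your proof is correct and is essentially the paper's own argument: both reduce the claim, by linearity, to showing that $\mathbb{J}_1\varphi(v)$ preserves $\mathbb{J}_1$ whenever $\varphi(v)$ does, and both deduce this by one application of the integrability identity (\ref{inteJ}) together with $\mathbb{J}_1^2=-\mathrm{id}$, which makes the two extra terms cancel. The only difference is presentational: you spell out the meaning of ``preserves'' as commutation of the adjoint (bracket) action with $\mathbb{J}_1$ and flag the $\mathbb{R}$-linearity point, which the paper leaves implicit.
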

\begin{proof}It suffices to verify that $\mathbb{J}_1(X_\varsigma+\xi_\varsigma)$ preserves $\mathbb{J}_1$: For any $A\in \Gamma(\mathbb{T})$,
\begin{eqnarray*}[\mathbb{J}_1(X_\varsigma+\xi_\varsigma), \mathbb{J}_1 A]_H&=&\mathbb{J}_1[X_\varsigma+\xi_\varsigma, \mathbb{J}_1 A]_H+\mathbb{J}_1[\mathbb{J}_1(X_\varsigma+\xi_\varsigma), A]_H+[X_\varsigma+\xi_\varsigma, A]_H\\
&=&\mathbb{J}_1[\mathbb{J}_1(X_\varsigma+\xi_\varsigma), A]_H
\end{eqnarray*}
where Eq.(\ref{inteJ}) and the fact $X_\varsigma+\xi_\varsigma$ preserves $\mathbb{J}_1$ are used.
\end{proof}
\emph{Remark}. The proposition is precisely the analogue of the fact in the K$\ddot{a}$hler case that the vector field $JX_\varsigma$ preserves the complex structure $J$.

Up to now, it seems that we are on the right way to complexifying the $G$-action. However, we come across some difficulty--The map $\varphi_\mathbb{C}$ is not necessarily a Lie algebra homomorphism. Let $S: \mathfrak{g}\times \mathfrak{g}\rightarrow C^\infty(M)$ be defined by $S(\varsigma, \zeta)=(\mathbb{J}_1\varphi(\varsigma), \varphi(\zeta))$ for $\varsigma,\zeta\in \mathfrak{g}$. Obviously $S(\varsigma, \zeta)=-S(\zeta, \varsigma)$. We have the following characterization of $S$.
\begin{proposition}
$S(\varsigma,\zeta)=\{\mu_\varsigma, \mu_\zeta\}_{\beta_1}$, i.e. the Poisson bracket of $\mu_\varsigma, \mu_\zeta$ w.r.t. the Poisson structure $\beta_1$.
\end{proposition}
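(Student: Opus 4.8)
The plan is to unwind both sides of the claimed identity $S(\varsigma,\zeta)=\{\mu_\varsigma,\mu_\zeta\}_{\beta_1}$ using the formula for $\varphi$ in the metric splitting together with the central Lemma~\ref{lemma}. First I would recall that by definition $S(\varsigma,\zeta)=(\mathbb{J}_1\varphi(\varsigma),\varphi(\zeta))$, where $\varphi(\zeta)=X_\zeta+\xi_\zeta$, and that the pairing is the natural one $(A+\alpha,B+\beta)=\tfrac12(\alpha(B)+\beta(A))$ in the conventions of the paper. The key geometric input is the computation already displayed just before the proposition, namely $\mathbb{J}_1\varphi(\varsigma)=\mathbb{J}_1(X_\varsigma+\xi_\varsigma)=g^{-1}d\mu_\varsigma=-Y_\varsigma$, so that $\mathbb{J}_1\varphi(\varsigma)$ is a purely tangent vector (its $T^*$-component vanishes). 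Substituting this into the pairing collapses $S(\varsigma,\zeta)$ to a single term, essentially $\tfrac12\,\xi_\zeta(g^{-1}d\mu_\varsigma)$ or $\tfrac12\,d\mu_\varsigma\big(\text{(tangent part)}\big)$, depending on how one tracks the factor-of-two convention.

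Next I would turn to the right-hand side. By definition of the Poisson bracket of a bivector, $\{\mu_\varsigma,\mu_\zeta\}_{\beta_1}=\beta_1(d\mu_\varsigma,d\mu_\zeta)$. Here the crucial identity, harvested immediately after Lemma~\ref{lemma}, is that $g^{-1}\xi_\varsigma=-\beta_1(d\mu_\varsigma)$, i.e.\ $g^{-1}\xi_\varsigma$ is the Hamiltonian vector field of $\mu_\varsigma$ with respect to $\beta_1$. Therefore $\beta_1(d\mu_\varsigma,d\mu_\zeta)=\langle d\mu_\zeta,\beta_1(d\mu_\varsigma)\rangle=-\langle d\mu_\zeta,g^{-1}\xi_\varsigma\rangle$, which I would rewrite using $\mathbb{J}_2(X_\zeta+\xi_\zeta)=d\mu_\zeta$ (Eq.~(\ref{gmm2})) and the relation $g^{-1}d\mu_\varsigma=-Y_\varsigma$ so that both sides are expressed through the same bihermitian quantities $X_\varsigma^\pm,\,\xi_\varsigma,\,g$.

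The technical heart of the proof is then a direct comparison: I would show the expression obtained for $S(\varsigma,\zeta)$ and the expression obtained for $\beta_1(d\mu_\varsigma,d\mu_\zeta)$ coincide after substituting $\beta_1=-\tfrac12(J_+-J_-)g^{-1}$ and using $Y_\varsigma=J_+X_\varsigma^+=J_-X_\varsigma^-$. Concretely, inserting $X_\varsigma^\pm=X_\varsigma\pm g^{-1}\xi_\varsigma$ lets one rewrite $\beta_1(d\mu_\varsigma)=-g^{-1}\xi_\varsigma$ purely in terms of $Y$'s and $X^\pm$'s, and the antisymmetry $S(\varsigma,\zeta)=-S(\zeta,\varsigma)$ together with the antisymmetry of the Poisson bracket provides a useful consistency check at each stage. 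The main obstacle I anticipate is purely bookkeeping rather than conceptual: keeping the conventions for the inner product (the factor $\tfrac12$), the sign in the Gualtieri map, and the definition of $\beta_1$ mutually consistent so that the two computations land on exactly the same scalar. I would organize the calculation so that both $S(\varsigma,\zeta)$ and $\beta_1(d\mu_\varsigma,d\mu_\zeta)$ are reduced to a common normal form such as $-\tfrac12\,\xi_\zeta(g^{-1}d\mu_\varsigma)$ before declaring equality, since once both sides are written through $g^{-1}\xi$ and $g^{-1}d\mu$ the agreement is immediate from the relations $\mathbb{J}_1\varphi(\varsigma)=g^{-1}d\mu_\varsigma$ and $g^{-1}\xi_\varsigma=-\beta_1(d\mu_\varsigma)$.
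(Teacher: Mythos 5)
Your argument is correct, but it takes a slightly different route from the paper's, which is worth comparing. The paper's proof is a single structural chain: writing $\varphi(\varsigma)=-\mathbb{J}_2 d\mu_\varsigma$ in \emph{both} slots, it computes
$S(\varsigma,\zeta)=(\mathbb{J}_1\mathbb{J}_2 d\mu_\varsigma,\mathbb{J}_2 d\mu_\zeta)=(\mathbb{J}_1 d\mu_\varsigma,d\mu_\zeta)=\{\mu_\varsigma,\mu_\zeta\}_{\beta_1}$,
where the middle step uses only the orthogonality of $\mathbb{J}_2$ (together with $[\mathbb{J}_1,\mathbb{J}_2]=0$), and the last step reads off $\beta_1$ as the $T^*\to T$ block of $\mathbb{J}_1$ in the Gualtieri map. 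You instead substitute only in the first slot, collapse $S(\varsigma,\zeta)$ to $\xi_\zeta(g^{-1}d\mu_\varsigma)$, and then invoke $g^{-1}\xi_\varsigma=-\beta_1(d\mu_\varsigma)$ plus the symmetry of $g^{-1}$ and the antisymmetry of $\beta_1$ to match the Poisson bracket; this works and lands on the same identity, at the cost of a little more bihermitian bookkeeping and one extra symmetry argument. The one point to fix is your normalization of the pairing: this paper's convention is $(X+\xi,Y+\eta)=\xi(Y)+\eta(X)$ with \emph{no} factor of $\tfrac12$, so the factor-of-two ambiguity you flag is resolved in favor of no $\tfrac12$; with your stated convention the computation would produce $\tfrac12\{\mu_\varsigma,\mu_\zeta\}_{\beta_1}$ instead. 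Both arguments rest on exactly the same nontrivial input, namely the consequence of Lemma~\ref{lemma} displayed just before the definition of $\varphi_\mathbb{C}$, so the difference is one of packaging: the paper's version never leaves the level of $\mathbb{J}_1,\mathbb{J}_2$, while yours makes the role of the Hamiltonian vector field $g^{-1}\xi_\varsigma=-\beta_1(d\mu_\varsigma)$ explicit.
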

\begin{proof}
\[S(\varsigma,\zeta)=(\mathbb{J}_1(X_\varsigma+\xi_\varsigma), X_\zeta+\xi_\zeta)=(\mathbb{J}_1\mathbb{J}_2 d\mu_\varsigma, \mathbb{J}_2d\mu_\zeta)=(\mathbb{J}_1 d\mu_\varsigma, d\mu_\zeta)=\{\mu_\varsigma, \mu_\zeta\}_{\beta_1}.\]
\end{proof}
 The appearance of $S$ is a totally new phenomenon compared with the K$\ddot{a}$hler case where $\beta_1\equiv 0$. $S$ is generally the obstruction for $\varphi_\mathbb{C}$ to be a Lie algebra homomorphism. This can be seen from the following direct computations:
\[[X_\varsigma+\xi_\varsigma, \mathbb{J}_1(X_\zeta+\xi_\zeta)]_H=\mathbb{J}_1[X_\varsigma+\xi_\varsigma, X_\zeta+\xi_\zeta]_H,\]
\[[\mathbb{J}_1(X_\varsigma+\xi_\varsigma), X_\zeta+\xi_\zeta)]_H=\mathbb{J}_1[X_\varsigma+\xi_\varsigma, X_\zeta+\xi_\zeta]_H+dS(\varsigma,\zeta),\]
\[[\mathbb{J}_1(X_\varsigma+\xi_\varsigma), \mathbb{J}_1(X_\zeta+\xi_\zeta)]_H=-[X_\varsigma+\xi_\varsigma, X_\zeta+\xi_\zeta]_H+\mathbb{J}_1 d S(\varsigma,\zeta),\]
where Eq.~(\ref{nonsk}) and Eq.~(\ref{inteJ}) are used. Note that $\{X_\varsigma, Y_\varsigma\}_{\varsigma\in \mathfrak{g}}$ spans a smooth distribution in the sense of Sussmann \footnote{Such a distribution is not necessarily of constant rank.} \cite{Sus}. This distribution is not necessarily integrable. However, we still have
\begin{proposition}\label{suss}The smooth distribution $D$ spanned by $\{X_\varsigma, Y_\varsigma\}_{\varsigma\in \mathfrak{g}}$ and the image of $\beta_1: T^*\rightarrow T$ is integrable.
\end{proposition}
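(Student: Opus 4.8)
The plan is to invoke Sussmann's integrability criterion \cite{Sus}: a smooth, locally finitely generated distribution that is invariant under the flows of a generating family of vector fields is integrable. Concretely, I would let $\mathcal{M}$ be the $C^\infty(M)$-module of vector fields generated by $\{X_\varsigma\}_{\varsigma\in\mathfrak{g}}$, $\{Y_\varsigma\}_{\varsigma\in\mathfrak{g}}$ and all $\beta_1(\alpha)$, $\alpha\in\Omega^1(M)$; this module spans $D$ pointwise and is locally finitely generated (finitely many $X_\varsigma,Y_\varsigma$ from a basis of $\mathfrak{g}$, together with the $\beta_1(dx^i)$ in a chart). Since flow-invariance of $D$ under a generator $V$ is implied by $[V,\Gamma(D)]\subseteq\Gamma(D)$, it suffices to check that the Lie bracket of any two of the three kinds of generators is again a section of $D$. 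Thus the whole proof reduces to four bracket computations.

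Two of them are essentially formal. The brackets $[X_\varsigma,X_\zeta]=X_{[\varsigma,\zeta]}$ and $[X_\varsigma,Y_\zeta]=Y_{[\varsigma,\zeta]}$ follow from $\varphi$ being a homomorphism of extended actions together with equivariance of $\mu$ and $G$-invariance of $g$; both land in $\mathrm{span}\{X_\bullet,Y_\bullet\}\subseteq D$. The bracket $[Y_\varsigma,Y_\zeta]$ I would read off from the third displayed Courant-bracket identity preceding this proposition: since $\mathbb{J}_1\varphi(\varsigma)=-Y_\varsigma$ is a pure vector field, taking the $TM$-component of $[\mathbb{J}_1\varphi(\varsigma),\mathbb{J}_1\varphi(\zeta)]_H=-[\varphi(\varsigma),\varphi(\zeta)]_H+\mathbb{J}_1 dS(\varsigma,\zeta)$ gives $[Y_\varsigma,Y_\zeta]=-X_{[\varsigma,\zeta]}+\beta_1(dS(\varsigma,\zeta))$, using $\tfrac12(\omega_+^{-1}-\omega_-^{-1})=\beta_1$. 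This is exactly the point at which the obstruction $S$ would threaten integrability, but it enters harmlessly because $\beta_1(dS(\varsigma,\zeta))\in\mathrm{Im}\,\beta_1\subseteq D$. Finally $[\beta_1(\alpha),\beta_1(\gamma)]\in\mathrm{Im}\,\beta_1$ because $\beta_1$ is a Poisson structure, so its characteristic distribution $\mathrm{Im}\,\beta_1$ is already involutive, $[\beta_1 df,\beta_1 dg]=\beta_1 d\{f,g\}_{\beta_1}$.

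The hard part will be the mixed brackets $[Y_\varsigma,\beta_1(\alpha)]$ (and $[X_\varsigma,\beta_1(\alpha)]$). Writing $\mathcal{L}_{Y_\varsigma}(\beta_1\alpha)=(\mathcal{L}_{Y_\varsigma}\beta_1)(\alpha)+\beta_1(\mathcal{L}_{Y_\varsigma}\alpha)$, the second term lies in $\mathrm{Im}\,\beta_1$, so everything hinges on showing that $Y_\varsigma$ is an infinitesimal Poisson automorphism of $\beta_1$, i.e. $\mathcal{L}_{Y_\varsigma}\beta_1=0$. I would deduce this from Proposition~\ref{pre}: the inner automorphism of the Courant algebroid generated by $\mathbb{J}_1\varphi(\varsigma)=-Y_\varsigma$ preserves $\mathbb{J}_1$, and although this inner automorphism carries a nontrivial $B$-field part $\iota_{Y_\varsigma}H$, the associated Poisson structure $\beta_1=\pi\,\mathbb{J}_1\,\pi^*$ is insensitive to $B$-field transforms, since $\pi\circ e^B=\pi$ and $e^{-B}\circ\pi^*=\pi^*$. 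Hence the induced action on $\beta_1$ is carried entirely by the underlying diffeomorphism (the flow $\psi_t$ of $-Y_\varsigma$); tracking the anchor through $\Phi_t\mathbb{J}_1=\mathbb{J}_1\Phi_t$ gives $\beta_1=\psi_{t*}\,\beta_1\,\psi_t^*$, that is $\mathcal{L}_{Y_\varsigma}\beta_1=0$. Consequently $[Y_\varsigma,\beta_1(\alpha)]=\beta_1(\mathcal{L}_{Y_\varsigma}\alpha)\in\mathrm{Im}\,\beta_1\subseteq D$, and the same argument applied to $\varphi(\varsigma)$, which preserves $\mathbb{J}_1$ as well, yields $\mathcal{L}_{X_\varsigma}\beta_1=0$.

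With all four brackets in $D$, the module $\mathcal{M}$ is involutive and locally finitely generated, hence $D$ is invariant under the flows of its generators, and Sussmann's theorem produces integral submanifolds through every point, so $D$ is integrable. I expect the single genuine obstacle to be the identity $\mathcal{L}_{Y_\varsigma}\beta_1=0$, i.e. recognizing that the gradient-type field $Y_\varsigma$, though it preserves neither $\mathbb{J}_2$ nor the metric, is nonetheless a symmetry of the Poisson structure $\beta_1$; this is precisely what prevents the $S$-term and the transverse directions $Y_\varsigma$ from enlarging the bracket-closure beyond $D$.
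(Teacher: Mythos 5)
Your proposal is correct and follows essentially the same route as the paper: the decisive step in both is that the inner automorphism generated by $\mathbb{J}_1\varphi(\varsigma)$ preserves $\mathbb{J}_1$ (Eq.~(\ref{inteJ}) / Prop.~\ref{pre}), which, projected through the anchor where the $B$-field part dies, gives $[Y_\varsigma,\beta_1(df)]=\beta_1(dL_{Y_\varsigma}f)\in\mathrm{Im}\,\beta_1$, i.e.\ $\mathcal{L}_{Y_\varsigma}\beta_1=0$ --- exactly the content of the paper's computation and its subsequent remark. You merely make explicit the remaining brackets ($[Y_\varsigma,Y_\zeta]=-X_{[\varsigma,\zeta]}+\beta_1(dS(\varsigma,\zeta))$, $G$-invariance of $\beta_1$, involutivity of $\mathrm{Im}\,\beta_1$) that the paper leaves to the preceding displayed identities.
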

\begin{proof}Since the Poisson structure $\beta_1$ is $G$-invariant, we only need to prove for any smooth function $f$ the Lie bracket of $Y_\varsigma$ and $\beta_1(df)$ lies in $D$. Note that by Eq.~(\ref{inteJ}), we have
\begin{eqnarray*}[\mathbb{J}_1(X_\varsigma+\xi_\varsigma), \mathbb{J}_1(df)]_H&=&\mathbb{J}_1[X_\varsigma+\xi_\varsigma, \mathbb{J}_1(df)]_H+\mathbb{J}_1[\mathbb{J}_1(X_\varsigma+\xi_\varsigma), df]_H\\
&+&[X_\varsigma+\xi_\varsigma, df]_H\\
&=&-[X_\varsigma+\xi_\varsigma, df]_H+\mathbb{J}_1[\mathbb{J}_1(X_\varsigma+\xi_\varsigma), df]_H+[X_\varsigma+\xi_\varsigma, df]_H\\
&=&\mathbb{J}_1[\mathbb{J}_1(X_\varsigma+\xi_\varsigma), df]_H\\
&=&-\mathbb{J}_1(dL_{Y_\varsigma }f),
\end{eqnarray*}
where we have used the fact that $\mathbb{J}_1$ is preserved by the extended $G$-action. Therefore,
\[\pi([\mathbb{J}_1(X_\varsigma+\xi_\varsigma), \mathbb{J}_1(df)]_H)=-\beta_1(dL_{Y_\varsigma }f).\]
However, the left hand side of the above equality is nothing else but $-[Y_\varsigma, \beta_1(df)]$. Our conclusion thus holds.
\end{proof}
\emph{Remark}. In particular, we have proved that $\beta_1$ is preserved by the vector fields $Y_\varsigma$. We feel the following proposition will be of value if one is to consider the interaction between the group action and $\beta_1$.
\begin{proposition}If either $X_\varsigma$ or $Y_\varsigma$ is tangent to a sympletic leaf $\mathcal{L}$ of $\beta_1$ at a point $x\in \mathcal{L}$, so is the other.
\end{proposition}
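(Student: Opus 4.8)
The plan is to reduce the statement to fibrewise linear algebra on $T$. For any Poisson structure the tangent space to the symplectic leaf through a point $x$ is exactly the image $\mathrm{Im}(\beta_1)_x$; since $\beta_1=-\tfrac12(J_+-J_-)g^{-1}$ and $g^{-1}$ is invertible, this coincides with $\mathrm{Im}(J_+-J_-)_x$. Thus the assertion ``$X_\varsigma$ (resp. $Y_\varsigma$) is tangent to $\mathcal{L}$ at $x$'' means precisely $X_\varsigma(x)\in\mathrm{Im}(J_+-J_-)_x$ (resp. $Y_\varsigma(x)\in\mathrm{Im}(J_+-J_-)_x$), so the whole claim is a statement in the single fibre over $x$. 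The first step is to record the algebraic link between the two vector fields coming from Lemma~\ref{lemma}: from $J_+X_\varsigma^+=J_-X_\varsigma^-=Y_\varsigma$ and $X_\varsigma^\pm=X_\varsigma\pm g^{-1}\xi_\varsigma$ one gets $X_\varsigma^\pm=-J_\pm Y_\varsigma$, whence
\[X_\varsigma=\tfrac12(X_\varsigma^++X_\varsigma^-)=-\tfrac12(J_++J_-)Y_\varsigma.\]

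Write $P:=J_+-J_-$ and $Q:=J_++J_-$, so that the leaf direction is $\mathrm{Im}(P)$ and $X_\varsigma=-\tfrac12 QY_\varsigma$. Since $\omega_\pm=gJ_\pm$ are $2$-forms, both $J_\pm$, and hence $P$ and $Q$, are skew-adjoint with respect to $g$. A direct expansion gives $PQ=[J_+,J_-]=-QP$, i.e. $P$ and $Q$ anticommute. Consequently $Q$ preserves $\ker P$ (if $Pv=0$ then $PQv=-QPv=0$), and, because $Q$ is $g$-skew-adjoint, it also preserves the orthogonal complement $(\ker P)^\perp=\mathrm{Im}(P)$. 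I would also note the elementary fact $\ker P\cap\ker Q=0$: a vector killed by both $P$ and $Q$ is killed by $\tfrac12(P+Q)=J_+$, which is invertible.

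With these preparations the two implications become immediate in the fibre over $x$. If $Y_\varsigma(x)\in\mathrm{Im}(P)_x$, then $X_\varsigma(x)=-\tfrac12 QY_\varsigma(x)\in\mathrm{Im}(P)_x$ because $Q$ preserves $\mathrm{Im}(P)$. Conversely, decompose $Y_\varsigma(x)=a+b$ orthogonally with $a\in\mathrm{Im}(P)_x$ and $b\in(\ker P)_x$; then $-2X_\varsigma(x)=Qa+Qb$ with $Qa\in\mathrm{Im}(P)_x$ and $Qb\in(\ker P)_x$, so the hypothesis $X_\varsigma(x)\in\mathrm{Im}(P)_x$ forces the $\ker P$-component $Qb$ to vanish; thus $b\in\ker P\cap\ker Q=0$ and $Y_\varsigma(x)=a\in\mathrm{Im}(P)_x$. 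The delicate point is exactly this converse: because $Q=J_++J_-$ may be singular, one cannot simply invert the relation $X_\varsigma=-\tfrac12 QY_\varsigma$, and the argument must instead use the orthogonal splitting $T=\ker P\oplus\mathrm{Im}(P)$ together with $\ker P\cap\ker Q=0$. The only external input is the standard description of the symplectic foliation of a Poisson manifold; everything else is the fibrewise linear algebra sketched above.
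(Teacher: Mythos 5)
Your proof is correct, but it takes a genuinely different route from the paper's. The paper works upstairs in the Courant algebroid: from the moment map condition one has $\mathbb{J}_1(X_\varsigma+\xi_\varsigma)=-Y_\varsigma$, and since the $T$-component of $\mathbb{J}_1$ restricted to $T^*$ is exactly $\beta_1$, the hypothesis $X_\varsigma=\beta_1(df)$ at $x$ lets one write $X_\varsigma+\xi_\varsigma=\mathbb{J}_1(df)+\tau$ for some $1$-form $\tau$; applying $-\mathbb{J}_1$ and using $\mathbb{J}_1^2=-1$ gives $Y_\varsigma=df-\mathbb{J}_1(\tau)$, whose $T$-component is $-\beta_1(\tau)\in\mathrm{Im}\,\beta_1$. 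Because $\mathbb{J}_1$ is invertible, the two directions are genuinely symmetric there, which is why the paper can dismiss the converse with ``similarly.'' You instead project everything to $T$ and work with the bihermitian operators: the same moment map identity yields $X_\varsigma=-\tfrac12(J_++J_-)Y_\varsigma$, the leaf direction is $\mathrm{Im}(J_+-J_-)$, and the claim becomes fibrewise linear algebra about $P=J_+-J_-$ and $Q=J_++J_-$. Your key observations --- that $P$ and $Q$ anticommute, hence $Q$ preserves both $\ker P$ and $(\ker P)^\perp=\mathrm{Im}\,P$, and that $\ker P\cap\ker Q=0$ because $\tfrac12(P+Q)=J_+$ is invertible --- are all correct, and you rightly identify the converse as the delicate direction, since $Q$ (equivalently $\beta_2$) need not be invertible in the generalized setting; your orthogonal-splitting argument handles it cleanly. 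What the paper's approach buys is brevity and manifest symmetry between the two directions (one inverts $\mathbb{J}_1$ rather than $Q$); what yours buys is a self-contained argument entirely in $T$ that makes visible the roles of the commutator $[J_+,J_-]$ and of the possible degeneracy of $J_++J_-$.
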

\begin{proof}If $X_\varsigma$ is tangent to $\mathcal{L}$ at $x$, then $X_\varsigma=\beta_1(df)$ at $x$ for some function $f$ defined around $x$. Therefore, at $x$
\[X_\varsigma+\xi_\varsigma=\mathbb{J}_1(df)+\tau\]
for some 1-form $\tau$. So
\[Y_\varsigma=-\mathbb{J}_1(X_\varsigma+\xi_\varsigma)=df-\mathbb{J}_1(\tau).\]
This shows that $Y_\varsigma=-\beta_1(\tau)$, i.e. $Y_\varsigma$ is also tangent to $\mathcal{L}$ at $x$. The converse can be proved similarly.
\end{proof}
\section{Strong Hamiltonian actions}\label{stro}
We have found in the previous section that the Poisson structure $\beta_1$ appears as the obstruction for complexfying the extended $\mathfrak{g}$-action. To proceed further, we are forced to introduce the following notion of \emph{strong Hamiltonian action}.
\begin{definition}\label{str}
If $M$ is a Hamiltonian $G$-generalized K$\ddot{a}$hler manifold, the $G$-action is called strong Hamiltonian if the map $S$  defined in \S ~\ref{Hami} vanishes.
\end{definition}
The definition has a direct consequence that in the strong Hamiltonian case, the underlying Poisson structure $\beta_1$ is preserved by the $\mathfrak{g}_\mathbb{C}$-action.
\begin{example}An ordinary Hamiltonian $G$-K$\ddot{a}$hler manifold viewed as a Hamiltonian $G$-generalized K$\ddot{a}$hler manifold is, of course, strong Hamiltonian.
\end{example}
The first nontrivial examples of strong Hamiltonian generalized K$\ddot{a}$hler manifolds are provided by the following
\begin{proposition}All Hamiltonian $S^1$-generalized K$\ddot{a}$hler manifolds are strong Hamiltonian.
\end{proposition}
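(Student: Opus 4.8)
The plan is to reduce the statement to the elementary observation that a skew-symmetric bilinear form on a one-dimensional vector space must vanish. The essential input has in fact already been recorded in the preceding section: the map $S$ is $\mathbb{R}$-bilinear in its two arguments (since $\varphi$ is linear, $\mathbb{J}_1$ is a fibrewise endomorphism, and the inner product is bilinear), and it is skew-symmetric, $S(\varsigma,\zeta)=-S(\zeta,\varsigma)$, as already noted in \S~\ref{Hami}.

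First I would invoke the fact that for $G=S^1$ the Lie algebra $\mathfrak{g}$ is one-dimensional. Fixing a generator $e\in\mathfrak{g}$, every pair of elements can be written as $\varsigma=ae$ and $\zeta=be$ with $a,b\in\mathbb{R}$, so by bilinearity $S(\varsigma,\zeta)=ab\,S(e,e)$. Next, applying skew-symmetry to the diagonal entry gives $S(e,e)=-S(e,e)$, whence $S(e,e)=0$ as a function on $M$. Consequently $S(\varsigma,\zeta)=0$ for all $\varsigma,\zeta\in\mathfrak{g}$, i.e. $S$ vanishes identically, which by Definition~\ref{str} is precisely the assertion that the $S^1$-action is strong Hamiltonian.

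I do not expect a genuine obstacle here: all the substance is already contained in the general skew-symmetry of $S$, and the low dimension of $\mathfrak{g}$ does the rest. The only points worth confirming explicitly are that $S$ is bilinear and skew-symmetric, the latter following from $\mathbb{J}_1$ being skew-adjoint with respect to the inner product (as $\mathbb{J}_1$ is orthogonal with $\mathbb{J}_1^2=-1$), after which the conclusion is immediate.
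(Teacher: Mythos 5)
Your argument is exactly the paper's: the proof there reads "This is obvious due to dimensional reason and the fact that $S$ is skew-symmetric," which is precisely the bilinearity-plus-skew-symmetry-on-a-one-dimensional-$\mathfrak{g}$ argument you spell out. Your version is correct and simply makes the paper's one-line proof explicit.
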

\begin{proof}This is obvious due to dimensional reason and the fact that $S$ is skew-symmetric.
\end{proof}
\emph{Remark}. Since Cartan subgroups are a basic ingredient of compact connected Lie groups, strong Hamiltonian $S^1$-actions naturally arise on a Hamiltonian $G$-generalized K$\ddot{a}$hler manifold.
\begin{example}Let $(M,g, I, J, K)$ be a hyperK$\ddot{a}$hler structure, and $\omega_I, \omega_J, \omega_K$ be the associated K$\ddot{a}$hler forms. According to the observation in \cite{Gu00}, a generalized K$\ddot{a}$hler structure can be constructed as follows:
\[\mathbb{J}_1=\left(
                 \begin{array}{cc}
                   1 & 0 \\
                   -\omega_K & 1 \\
                 \end{array}
               \right)\left(
                        \begin{array}{cc}
                          0 & \frac{1}{2}(\omega_I^{-1}+\omega_J^{-1}) \\
                          -\omega_I-\omega_J & 0 \\
                        \end{array}
                      \right)\left(
                 \begin{array}{cc}
                   1 & 0 \\
                   \omega_K & 1 \\
                 \end{array}
               \right)
\]
\[\mathbb{J}_2=\left(
                 \begin{array}{cc}
                   1 & 0 \\
                   \omega_K & 1 \\
                 \end{array}
               \right)\left(
                        \begin{array}{cc}
                          0 & \frac{1}{2}(\omega_I^{-1}-\omega_J^{-1}) \\
                          -\omega_I+\omega_J & 0 \\
                        \end{array}
                      \right)\left(
                 \begin{array}{cc}
                   1 & 0 \\
                   -\omega_K & 1 \\
                 \end{array}
               \right).
\]
Note that we are already in the metric splitting. Suppose $M$ carries a Hamiltonian $\mathbb{T}^k$(torus of dimension $k$)-action with moment map $(\mu^I, \mu^J, \mu^K): M\rightarrow \mathfrak{t}^*\oplus\mathfrak{t}^*\oplus\mathfrak{t}^*$, where $\mathfrak{t}$ is the Lie algebra of $\mathbb{T}^k$. Then for $\varsigma\in \mathfrak{t}$, we have $\iota_{X_\varsigma}\omega_I=-d\mu_{\varsigma}^I$,
$\iota_{X_\varsigma}\omega_J=-d\mu_{\varsigma}^J$ and $\iota_{\varsigma}\omega_K=-d\mu_{\varsigma}^K$. According to computations in \cite{LT}, $\mu:=\mu^I-\mu^J$ is a moment map associated to $\mathbb{J}_2$. This $\mathbb{T}^k$-action is actually strong Hamiltonian: Note that $\omega_I^{-1}d\mu_\varsigma=-KX_\varsigma-X_\varsigma$ and $\omega_J^{-1}d\mu_\varsigma=-KX_\varsigma+X_\varsigma$. So $$\beta_1(d\mu_\varsigma)=\frac{1}{2}(\omega_I^{-1}+\omega_J^{-1})d\mu_\varsigma=-KX_\varsigma.$$ We thus have
\begin{eqnarray*}\{\mu_\varsigma, \mu_\zeta\}_{\beta_1}&=&-d\mu_\zeta(KX_\varsigma)=d\mu_{\zeta}^J(KX_\varsigma)-d\mu_{\zeta}^I(KX_\varsigma)\\
&=&-\omega_J(X_\zeta, KX_\varsigma)+\omega_I(X_\zeta, KX_\varsigma)\\
&=&g(KJX_\zeta,X_\varsigma)-g(KIX_\zeta,X_\varsigma)\\
&=&-g(IX_\zeta,X_\varsigma)-g(JX_\zeta,X_\varsigma)\\
&=&\omega_I(X_\varsigma,X_\zeta)+\omega_J(X_\varsigma,X_\zeta)\\
&=&X_\varsigma\mu_{\zeta}^I+X_\varsigma\mu_{\zeta}^J\\
&=&0.
\end{eqnarray*}
The last equality is due to the fact that $\mu^I$ and $\mu^J$ are actually $\mathbb{T}^k$-invariant.
\end{example}
\begin{example}Let us first construct an $SU(N-3)$-invariant generalized K$\ddot{a}$hler structure on $M=\mathbb{C}P^N$ where $N>3$. This is adapted from a generalized K$\ddot{a}$hler structure on $\mathbb{C}P^2$ in \cite{BCG1} as follows. Let $\mathbb{\mathcal{J}}_1$, $\mathbb{\mathcal{J}}_2$ be the canonical generalized complex structures on $\mathbb{C}^{N+1}$ associated to the canonical complex and symplectic structures. $S^1$ acts on $\mathbb{C}^{N+1}$ by scaling, with a moment map $\mu_0(z)=|z|^2-1$. One chooses an $S^1$-invariant deformation $\varepsilon$ of $\mathcal{J}_1$ while keeping $\mathcal{J}_2$ fixed. For instance, we choose
\[\varepsilon=\frac{1}{2}z_0^2(\partial_{z_1}+\frac{1}{2}d\bar{z}_1)\wedge (\partial_{z_2}-\frac{1}{2}d\bar{z}_2). \]
Then $\mathcal{J}_1$ is deformed to another generalized complex structure $\mathcal{J}_1^\varepsilon$ away from the cylinder $|z_0|=\sqrt{2}$. A pure spinor\footnote{We haven't reviewed the pure spinor description of a generalized complex structure in this paper, for this see \cite{Gu00}.} of $\mathcal{J}_1^\varepsilon$ is
\[\varphi^\varepsilon=e^\varepsilon\cdot (\Omega_1\wedge \Omega_2)=(e^\varepsilon\cdot \Omega_1)\wedge \Omega_2,\]
where $\Omega_1=dz_0dz_1dz_2$ and $\Omega_2=dz_3dz_4\cdots dz_N$. Then the pair $(\mathcal{J}_1^\varepsilon, \mathcal{J}_2)$ is an $S^1$-invariant generalized K$\ddot{a}$hler structure, and by the reduction procedure developed in \cite{LT} $\mathbb{C}P^N$ acquires a generalized K$\ddot{a}$hler structure $(\mathbb{J}_1, \mathbb{J}_2)$. Actually, the reduction of $\mathcal{J}_2$ is precisely the classical Marsden-Weinstein reduction, giving rise to the Fubini-Study symplectic form. Note that the standard action of $SU(N+1)$ on $\mathbb{C}^{N+1}$ commutes with the $S^1$-action and preserves $\mathcal{J}_2$. Thus the $SU(N+1)$-action descends to $\mathbb{C}P^N$ and preserves the Fubini-Study form. This Fubini-Study action is Hamiltonian with a canonical moment map.

Let us describe a strong Hamiltonian $SU(N-3)$-action on the generalized K$\ddot{a}$hler manifold $(\mathbb{C}P^N, \mathbb{J}_1, \mathbb{J}_2)$. $SU(N-3)$, as a subgroup of $SU(N+1)$, acts trivially on the first three components of $\mathbb{C}^{N+1}$ and nontrivially on the remainder components. By our construction, $\varepsilon$ is invariant under this $SU(N-3)$-action and therefore the pair $(\mathbb{J}_1, \mathbb{J}_2)$ is also invariant. The moment map $\mu$ of this $SU(N-3)$-action is just the restriction of the Fubini-Study moment map on the Lie algebra of $SU(N-3)$. By our construction, only the tranverse complex coordinates induced from $\mathbb{J}_1$ are involved in the moment map. The components of $\mu$ are thus Casimir functions w.r.t. $\beta_1$ and hence the $SU(N-3)$-action is strong Hamiltonian.
\end{example}

We will give more nontrivial examples in our future work, and here only turn to the general question of complexifying the extended $G$-action.
\begin{proposition}If $M$ is a strong Hamiltonian $G$-generalized K$\ddot{a}$hler manifold, the pre-complexification $\varphi_\mathbb{C}$ of $\varphi$ is actually an extended $\mathfrak{g}_{\mathbb{C}}$-action preserving $\mathbb{J}_1$.
\end{proposition}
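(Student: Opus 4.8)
The plan is to verify directly the two defining conditions of an isotropic trivially extended $\mathfrak{g}_\mathbb{C}$-action for $\varphi_\mathbb{C}$, namely that its image is pointwise isotropic and that it is a Lie algebra homomorphism. Preservation of $\mathbb{J}_1$ has already been settled in Proposition~\ref{pre}, and the covering property is then automatic: since the anchor $\pi$ intertwines the Courant and Lie brackets, once $\varphi_\mathbb{C}$ is a homomorphism the underlying map $\pi\circ\varphi_\mathbb{C}$ is an infinitesimal $\mathfrak{g}_\mathbb{C}$-action on $M$, which is exactly what $\varphi_\mathbb{C}$ covers. Throughout, the decisive input is that strong Hamiltonicity forces $S\equiv 0$, which simultaneously annihilates the obstruction terms $dS(\varsigma,\zeta)$ appearing in the three bracket identities displayed just before Proposition~\ref{suss} and the $S$-terms that would otherwise spoil isotropy.

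For the homomorphism property I would write $a=u_1+\sqrt{-1}v_1$ and $b=u_2+\sqrt{-1}v_2$ in $\mathfrak{g}_\mathbb{C}$, so that $\varphi_\mathbb{C}(a)=\varphi(u_1)-\mathbb{J}_1\varphi(v_1)$ and likewise for $b$, and then expand $[\varphi_\mathbb{C}(a),\varphi_\mathbb{C}(b)]_H$ by $\mathbb{R}$-bilinearity into four terms. With $S\equiv 0$ the three displayed identities read $[\varphi(\varsigma),\mathbb{J}_1\varphi(\zeta)]_H=\mathbb{J}_1\varphi([\varsigma,\zeta])$, $[\mathbb{J}_1\varphi(\varsigma),\varphi(\zeta)]_H=\mathbb{J}_1\varphi([\varsigma,\zeta])$ and $[\mathbb{J}_1\varphi(\varsigma),\mathbb{J}_1\varphi(\zeta)]_H=-\varphi([\varsigma,\zeta])$, while $[\varphi(\varsigma),\varphi(\zeta)]_H=\varphi([\varsigma,\zeta])$ because $\varphi$ is itself an extended action. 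Feeding these four relations into the expansion collects the result into $\varphi([u_1,u_2]-[v_1,v_2])-\mathbb{J}_1\varphi([u_1,v_2]+[v_1,u_2])$, which is exactly $\varphi_\mathbb{C}$ applied to $[a,b]=[u_1,u_2]-[v_1,v_2]+\sqrt{-1}([u_1,v_2]+[v_1,u_2])$. Hence $\varphi_\mathbb{C}([a,b])=[\varphi_\mathbb{C}(a),\varphi_\mathbb{C}(b)]_H$.

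Isotropy is checked on the same pair: expanding $(\varphi_\mathbb{C}(a),\varphi_\mathbb{C}(b))$ yields four terms, of which $(\varphi(u_1),\varphi(u_2))$ and $(\mathbb{J}_1\varphi(v_1),\mathbb{J}_1\varphi(v_2))$ vanish because $\varphi$ is isotropic and $\mathbb{J}_1$ is orthogonal, while the two cross terms reduce to $S(u_1,v_2)-S(v_1,u_2)$ after using the skew-adjointness $(A,\mathbb{J}_1 B)=-(\mathbb{J}_1 A,B)$ that follows from orthogonality of $\mathbb{J}_1$ together with $\mathbb{J}_1^2=-\mathrm{id}$; these vanish as well since $S\equiv 0$. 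I do not expect any genuine obstacle here: once $S$ has been identified in the previous section as the single obstruction, and strong Hamiltonicity has been defined precisely so as to kill it, both conditions fall out of the bilinear expansions. The only points demanding care are bookkeeping ones—respecting the non-skew-symmetry of the Courant bracket, so that ordinary bilinearity rather than skew-bilinearity is used, and correctly matching the real and imaginary parts of $[a,b]$ with the $\varphi$- and $\mathbb{J}_1\varphi$-parts of $\varphi_\mathbb{C}$.
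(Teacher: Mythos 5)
Your proposal is correct and follows essentially the same route as the paper's (much terser) proof: isotropy of $\varphi_\mathbb{C}$ from the identity $S(\varsigma,\zeta)=(\mathbb{J}_1\varphi(\varsigma),\varphi(\zeta))$ together with $S\equiv 0$, the homomorphism property from the three displayed bracket identities with their $dS$ obstruction terms killed, and preservation of $\mathbb{J}_1$ by citing Proposition~\ref{pre}. You simply spell out the bilinear expansions that the paper leaves implicit, and your bookkeeping of signs and of the non-skew-symmetry of the Courant bracket is accurate.
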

\begin{proof}By definition of Hamiltonian actions, the $\mathfrak{g}$-action is isotropic trivially extended. That this continues to hold for $\varphi_\mathbb{C}$ is a direct result of the fact $S(\varsigma,\zeta)=(\mathbb{J}_1\varphi(\varsigma), \varphi(\zeta))$. Since the obstruction $S$ vanishes, $\varphi_\mathbb{C}$ is thus a Lie algebra homomorphism. Prop.~\ref{pre} shows this extended $\mathfrak{g}_\mathbb{C}$-action preserves $\mathbb{J}_1$.
\end{proof}
The next question we shall tackle is whether the extended $\mathfrak{g}_\mathbb{C}$-action can be integrated to a $G^\mathbb{C}$-action. We content ourselves with the following result.
\begin{theorem}\label{inte}If $M$ is a strong Hamiltonian $G$-generalized K$\ddot{a}$hler compact manifold, then the extended $\mathfrak{g}_\mathbb{C}$-action defined in Def. \ref{str} integrates to a unique extended $G^\mathbb{C}$-action.
\end{theorem}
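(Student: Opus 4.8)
The plan is to split the integration into two layers: first integrate the underlying infinitesimal $\mathfrak{g}_\mathbb{C}$-action on the base $M$ to a $G^\mathbb{C}$-action, and then lift it to an action on the Courant algebroid $E$ by automorphisms. Applying the anchor $\pi$ to the pre-complexification, one sees that $\varphi_\mathbb{C}$ covers the assignment $u+\sqrt{-1}v\mapsto X_u+Y_v$ on $M$, where $Y_v=-g^{-1}d\mu_v$ is the gradient-like field appearing in Lemma~\ref{lemma}. The strong Hamiltonian hypothesis $S\equiv 0$ is exactly what makes $\varphi_\mathbb{C}$ (and hence this underlying assignment) a Lie algebra homomorphism, as recorded in the preceding proposition. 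Since $M$ is compact, every vector field $X_u+Y_v$ is complete, so we have a homomorphism from the finite-dimensional Lie algebra $\mathfrak{g}_\mathbb{C}$ into the complete vector fields on $M$.

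For the first layer I would invoke the standard integrability theorem of Palais: a homomorphism from a finite-dimensional Lie algebra into complete vector fields integrates to an action of the simply connected group $\widetilde{G^\mathbb{C}}$. To descend this to $G^\mathbb{C}$ itself, I would use the Cartan decomposition $G^\mathbb{C}\cong G\times\mathfrak{g}$ (as manifolds), which exhibits $G^\mathbb{C}$ as retracting onto its maximal compact $G$, so that the inclusion $G\hookrightarrow G^\mathbb{C}$ induces an isomorphism $\pi_1(G)\cong\pi_1(G^\mathbb{C})$. Lifting $G\hookrightarrow G^\mathbb{C}$ to $\widetilde{G}\to\widetilde{G^\mathbb{C}}$ and restricting the $\widetilde{G^\mathbb{C}}$-action to the image of $\widetilde{G}$ yields an action integrating the $\mathfrak{g}$-part; by uniqueness of integration this must coincide with the given $G$-action, forcing the kernel $\pi_1(G)$ of $\widetilde{G}\to G$ to act trivially. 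Under the isomorphism above this kernel maps onto the full kernel $\pi_1(G^\mathbb{C})$ of $\widetilde{G^\mathbb{C}}\to G^\mathbb{C}$, so the action descends to a genuine $G^\mathbb{C}$-action on $M$, unique by construction.

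For the second layer, recall that each section $\varphi_\mathbb{C}(\zeta)=\hat{\zeta}+\eta_\zeta\in\Gamma(E)$ generates the infinitesimal inner automorphism $(\hat{\zeta},\,d\eta_\zeta-\iota_{\hat{\zeta}}H)$. Given the flow $\psi_t^\zeta$ of $\hat{\zeta}$, which exists for all time on the compact $M$, the induced one-parameter group of automorphisms is $\bigl(\psi_t^\zeta,\ \int_0^t(\psi_s^\zeta)^*(d\eta_\zeta-\iota_{\hat{\zeta}}H)\,ds\bigr)$, consistent with the composition law $(\psi_1,B_1)(\psi_2,B_2)=(\psi_1\psi_2,\,B_2+\psi_2^*B_1)$. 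Because $\varphi_\mathbb{C}$ is an \emph{isotropic} Lie algebra homomorphism, these one-parameter groups assemble coherently over the $G^\mathbb{C}$-action on $M$ produced in the first layer, yielding the desired extended $G^\mathbb{C}$-action $(\psi_g,\mathcal{B}_g)$ whose infinitesimal version is $\varphi_\mathbb{C}$; the compatibility $H-\psi_g^*H=d\mathcal{B}_g$ and the cocycle property of the $\mathcal{B}_g$ follow from the same integration.

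The main obstacle is the global integration of the first layer. Passing from an infinitesimal action to a global one demands \emph{both} completeness of the vector fields---which is precisely what the compactness of $M$ supplies---and control of the fundamental group so as to land in $G^\mathbb{C}$ rather than merely its universal cover; this last point is handled by the Cartan-decomposition retraction together with uniqueness of integration of the given compact $G$-action. The strong Hamiltonian hypothesis is indispensable upstream, since without $S\equiv 0$ the map $\varphi_\mathbb{C}$ fails to be a Lie algebra homomorphism and there is no integrable $\mathfrak{g}_\mathbb{C}$-action to begin with. By comparison, the lift to $E$ in the second layer is essentially formal, with the isotropy condition guaranteeing the consistency of the accompanying $B$-fields.
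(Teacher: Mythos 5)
Your proposal is correct and follows essentially the same route as the paper: both reduce the problem to completeness of the generating vector fields (supplied by compactness of $M$, with the same explicit pair $(\psi_t,B_t)$ realizing the flow on $E$) and then descend from the universal cover to $G^\mathbb{C}$ by noting that the covering kernel, which already acts trivially through the given $G$-action, is shared by $G$ and $G^\mathbb{C}$ via the Cartan decomposition. The only organizational difference is that the paper applies the integration criterion of \cite{GSK} once, directly to the $\mathfrak{g}_\mathbb{C}$-action on the total space of $E$, whereas you integrate on $M$ first and then lift the action to $E$; the technical content is the same.
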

\begin{proof}Let $\tilde{G}$ be the universal cover of $G$. The complexification $\varphi_\mathbb{C}$ actually gives rise to an infinitesimal Lie algebra action of $\mathfrak{g}_\mathbb{C}$ on the total space of the Courant algebroid $E$. Recall the fact that a $\mathfrak{g}_\mathbb{C}$-action on a manifold $N$ integrates to a $\tilde{G}^\mathbb{C}$-action if and only if each vector field on $N$ generated by an element in $\mathfrak{g}_\mathbb{C}$ is complete \cite[Appendix B]{GSK}. Since we already have the $G$-action on $E$, to prove the theorem, it suffices to check the infinitesimal automorphism $(Y_\varsigma, -\iota_{Y_\varsigma}H)$, regarded as a vector field on $E$, generates a global flow. Let $\psi_t$ be the flow on $M$ generated by $Y_\varsigma$. It is of course global since $M$ is compact. Define a one-parameter 2-form on $M$
\[B_t:=-\int_0^t\psi_{-\tau} ^*(\iota_{Y_\varsigma}H)d\tau,\quad t\in \mathbb{R}.\]
Then it is routine to check that the pair $(\psi_t, B_t)$ acting on $E$ gives the desired global flow.

There should be a discrete subgroup $\Pi$ of $\tilde{G}$ contained in the center of $\tilde{G}$ such that
\[G=\tilde{G}/\Pi, \quad G^\mathbb{C}=\tilde{G}^\mathbb{C}/\Pi.\]
Since when restricted on $\tilde{G}$ the $\tilde{G}^\mathbb{C}$-action factors through $\Pi$,
we actually have an extended $G^\mathbb{C}$-action. The uniqueness of this extension is determined by the infinitesimal action.

\end{proof}
\emph{Remark}. In ordinary equivariant K$\ddot{a}$hler geometry, the theorem has a more direct proof (see for example \cite{GS1}), which uses the well-known fact that the automorphism group of a compact complex manifold is a Lie transformation group. However, this won't work for the present setting if one just recalls the equally well-known fact that the automorphism group of a compact symplectic manifold is not a Lie transformation group. It is also remarkable here that the underlying $G^\mathbb{C}$-action in general will preserve neither $J_+$ nor $J_-$, and thus is not holomorphic w.r.t. either of the two.
\section{GIT quotients and generalized holomorphic structures}\label{GIT}
Throughout this section, let $M$ be a compact $G$-generalized K$\ddot{a}$hler manifold. To have a good quotient, we assume that we are in the strong Hamiltonian case, and that 0 is a regular value of $\mu$.\footnote{Note that $X_\varsigma=-\beta_2(d\mu_\varsigma)$. In the K$\ddot{a}$hler case, $\beta_2$ is invertible and therefore that (i) 0 is a regular value of $\mu$ is equivalent to that (ii) $G$ acts locally freely on $\mu^{-1}(0)$. This equivalence still holds here but needs more technical arguments, which can be found in \cite{Ni}.} Consequently $0$ is a regular value of $\mu$, the strong Hamiltonian $G$-action on $M$ can be complexified and we can talk about the extended $G^{\mathbb{C}}$-action and $G^{\mathbb{C}}$-orbits in $M$.

Define $M_s:=\{g x|g\in G^\mathbb{C}, x\in \mu^{-1}(0)\}$. From a GIT viewpoint, points in $M_s$ can reasonably be called stable and $M_s$ be called the stable locus.
\begin{proposition}\label{stableloci}$M_s$ is an open set of $M$.
\end{proposition}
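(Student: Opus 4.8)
The plan is to realize the stable locus as the image of a single smooth map and to prove openness by a submersion (transversality) argument at the zero level set, exactly in the spirit of the classical GIT/K$\ddot{\rm a}$hler picture. Concretely, consider
\[
\Phi: G^{\mathbb{C}}\times\mu^{-1}(0)\longrightarrow M,\qquad \Phi(g,x)=g\cdot x,
\]
so that $M_s=\mathrm{Image}(\Phi)$. Here the $G^{\mathbb{C}}$-action on $M$ is the diffeomorphism action underlying the extended $G^{\mathbb{C}}$-action on $E$ produced by Thm.~\ref{inte}. The first thing I would pin down is its infinitesimal generators along $\mu^{-1}(0)$. By definition of the pre-complexification, $\sqrt{-1}\varsigma$ acts through $\varphi_{\mathbb{C}}(\sqrt{-1}\varsigma)=-\mathbb{J}_1\varphi(\varsigma)$, whose anchor is $Y_\varsigma$ (this is exactly the vector field appearing in the proof of Thm.~\ref{inte}); by Lemma~\ref{lemma}, $Y_\varsigma=-g^{-1}d\mu_\varsigma$. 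Thus the real generators $X_\varsigma$ are tangent to $\mu^{-1}(0)$ by equivariance, while the imaginary generators $Y_\varsigma$ are minus the $g$-gradients of the components $\mu_\varsigma$, which is the generalized-K$\ddot{\rm a}$hler analogue of the gradient formula Eq.~(\ref{grad}).

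Next I would carry out the transversality step. Fix $x\in\mu^{-1}(0)$. Since $0$ is a regular value, $T_x\mu^{-1}(0)=\ker d\mu_x$ has codimension $\dim\mathfrak{g}$ and the covectors $d\mu_\varsigma$ are pointwise linearly independent. For any $v\in T_x\mu^{-1}(0)$ one has $g(Y_\varsigma,v)=-d\mu_\varsigma(v)=0$, so the $Y_\varsigma$ are $g$-orthogonal to the level set, and by independence they span its orthogonal complement. Hence
\[
T_xM = T_x\mu^{-1}(0)\;\oplus\;\mathrm{span}_{\mathbb{R}}\{\,Y_\varsigma \mid \varsigma\in\mathfrak{g}\,\}.
\]
The differential $d\Phi_{(e,x)}$ sends the $\mu^{-1}(0)$-factor onto $T_x\mu^{-1}(0)$ and the subspace $\sqrt{-1}\mathfrak{g}\subset\mathfrak{g}_{\mathbb{C}}$ onto $\mathrm{span}_{\mathbb{R}}\{Y_\varsigma\}$, so $d\Phi_{(e,x)}$ is surjective. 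Therefore $\Phi$ is a submersion at $(e,x)$ and maps a neighborhood of $(e,x)$ onto an open set $V_x\subset M_s$ containing $x$.

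To globalize, set $W=\bigcup_{x\in\mu^{-1}(0)}V_x$, an open set with $\mu^{-1}(0)\subset W\subset M_s$. Because each $g\in G^{\mathbb{C}}$ acts on $M$ as a diffeomorphism and $M_s$ is $G^{\mathbb{C}}$-invariant, the set $G^{\mathbb{C}}\cdot W=\bigcup_{g}g\cdot W$ is a union of open sets, hence open, and is squeezed between $G^{\mathbb{C}}\cdot\mu^{-1}(0)=M_s$ and $M_s$ itself. It follows that $M_s=G^{\mathbb{C}}\cdot W$ is open.

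The main obstacle, and really the only nontrivial point, is the transversality in the middle step: that the imaginary orbit directions $Y_\varsigma$ fill the normal directions of $\mu^{-1}(0)$. This rests entirely on the identification $Y_\varsigma=-g^{-1}d\mu_\varsigma$ from Lemma~\ref{lemma}; once it is available, the openness argument is formally identical to the K$\ddot{\rm a}$hler case. A secondary point that must be handled with care is that the relevant action is the diffeomorphism action underlying the extended action on $E$, whose imaginary infinitesimal generator is precisely the anchor $Y_\varsigma$, so no further analytic input (beyond the existence of the global action from Thm.~\ref{inte}) is needed; note in particular that compactness of $M$ enters only through Thm.~\ref{inte} and not through the openness itself.
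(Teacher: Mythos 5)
Your proof is correct and follows essentially the same route as the paper: both arguments reduce openness to showing that the vector fields $Y_{\varsigma_a}$ are pointwise linearly independent along $\mu^{-1}(0)$ and $g$-orthogonal to $T\mu^{-1}(0)$, so that the $\sqrt{-1}\mathfrak{g}$-directions sweep out a neighbourhood $U$ of $\mu^{-1}(0)$ and $M_s=\bigcup_{g\in G^{\mathbb{C}}}gU$ is open. The only (harmless) divergence is in the independence step: you read it off directly from $Y_\varsigma=-g^{-1}d\mu_\varsigma$ and the regular-value hypothesis, whereas the paper deduces it from local freeness of the $G$-action via $Y_a=J_+X_a^+=J_-X_a^-$ --- equivalent hypotheses here, as the paper's own footnote notes.
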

\begin{proof}
If $M_s$ contains an open neighbourhood $U$ of $\mu^{-1}(0)$, then $M_s=\cup_{g\in G^\mathbb{C}}g U$ and is thus an open set of $M$. Let $\{\varsigma_a\}$ be a basis of $\mathfrak{g}$ and $X_a$ the vector field generated by $\varsigma_a$. Since $\mu^{-1}(0)$ has codimension $\textup{dim} \mathfrak{g}$ in $M$, we only need to prove the vector fields $Y_a=J_+X_a^+$ are linearly independent over $\mu^{-1}(0)$ and orthogonal to $T\mu^{-1}(0)$ in $T$.

 If $x\in \mu^{-1}(0)$ and $\sum_af^aY_a=0$ at $x$ for some constants $f^a$, then $\sum_af^aX_a^+=\sum_af^aX_a^-=0$ at $x$ due to the fact that $Y_a=J_+X_a^+=J_-X_a^-$. Therefore we have $\sum_af^aX_a=0$ and $\sum_a f^a \varsigma_a$ lies in the Lie algebra of the stabilizer $G_x (\subset G)$ of $x$. All $f^a$ should be zero because $G$ acts locally freely on $\mu^{-1}(0)$. Therefore, vector fields $Y_a$'s are linearly independent pointwise over $\mu^{-1}(0)$.
If $v\in T_x\mu^{-1}(0)$, then at $x$ $g(Y_a, v)=-d\mu_{\varsigma_a}(v)=0$ by definition. This completes the proof.
\end{proof}
\begin{proposition}\label{stablizer}
The stabilizer $G^\mathbb{C}_x$ in $G^\mathbb{C}$ of a point $x\in M_s$ is finite. If $x\in \mu^{-1}(0)$, then $G_x^\mathbb{C}=G_x$; in particular, if $G$ acts freely on $\mu^{-1}(0)$, then $G^\mathbb{C}$ acts freely on $M_s$.
\end{proposition}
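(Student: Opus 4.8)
The plan is to transplant the Kempf--Ness/Kirwan argument from ordinary K\"ahler geometry, the crucial input being the gradient identity of Lemma~\ref{lemma}. Recall that the extended $G^{\mathbb{C}}$-action was obtained by integrating $\varphi_{\mathbb{C}}$, and that $\sqrt{-1}\varsigma\in\sqrt{-1}\mathfrak{g}$ is sent to $\varphi_{\mathbb{C}}(\sqrt{-1}\varsigma)=-\mathbb{J}_1\varphi(\varsigma)=-\mathbb{J}_1(X_\varsigma+\xi_\varsigma)$, whose anchor is the vector field $Y_\varsigma=-g^{-1}d\mu_\varsigma$, i.e. \emph{minus} the gradient of $\mu_\varsigma$. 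This is precisely the classical situation underlying Kempf--Ness theory. Accordingly, I would first settle the statement on $\mu^{-1}(0)$ by a monotonicity argument along the orbits of $\exp(\sqrt{-1}t\varsigma)$, and then propagate it to all of $M_s$ by conjugating stabilizers, following Kirwan \cite{Kir}.

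First I would record the monotonicity. For $\varsigma\in\mathfrak{g}$ and $x\in M$, set $f(t):=\mu_\varsigma(\exp(\sqrt{-1}t\varsigma)x)$; since the flow of $\sqrt{-1}\varsigma$ is generated by $Y_\varsigma$, one computes $f'(t)=d\mu_\varsigma(Y_\varsigma)=-d\mu_\varsigma(g^{-1}d\mu_\varsigma)=-|d\mu_\varsigma|_g^2\le 0$. Hence $f$ is non-increasing, and is constant on an interval if and only if $d\mu_\varsigma$ vanishes at every point of the corresponding orbit segment (using that $g$ is positive definite).

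Next I would treat the core case $x\in\mu^{-1}(0)$. Using the Cartan decomposition $G^{\mathbb{C}}=G\exp(\sqrt{-1}\mathfrak{g})$, I write a stabilizing element as $g=k\exp(\sqrt{-1}\varsigma)$ with $k\in G$ and $\varsigma\in\mathfrak{g}$, so that $gx=x$ rearranges to $\exp(\sqrt{-1}\varsigma)x=k^{-1}x$. Because $k\in G$ and $\mu$ is equivariant, $k^{-1}x$ again lies in $\mu^{-1}(0)$, whence $f(1)=\mu_\varsigma(k^{-1}x)=0=f(0)$. A non-increasing function with equal endpoint values is constant, so $f\equiv 0$ on $[0,1]$ and in particular $f'(0)=-|d\mu_\varsigma|_g^2|_x=0$, giving $d\mu_\varsigma|_x=0$. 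By Lemma~\ref{lemma} we have $X_\varsigma=-\beta_2(d\mu_\varsigma)$, so $X_\varsigma|_x=0$; local freeness of the $G$-action on $\mu^{-1}(0)$ then forces $\varsigma=0$. Therefore $g=k\in G_x$, which proves $G^{\mathbb{C}}_x\subseteq G_x$; the reverse inclusion being trivial, $G^{\mathbb{C}}_x=G_x$, and this is finite since $G$ acts locally freely.

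Finally, for a general $x\in M_s$ I would write $x=g_0 y$ with $g_0\in G^{\mathbb{C}}$ and $y\in\mu^{-1}(0)$. Stabilizers transform by conjugation, $G^{\mathbb{C}}_x=g_0\,G^{\mathbb{C}}_y\,g_0^{-1}=g_0\,G_y\,g_0^{-1}$, which is finite because $G_y$ is; this gives the first assertion. The ``in particular'' clause is then immediate, for if $G$ acts freely on $\mu^{-1}(0)$ then each $G_y$ is trivial, hence every $G^{\mathbb{C}}_x$ is trivial and $G^{\mathbb{C}}$ acts freely on $M_s$. I expect the only delicate point to be the chain $f\equiv 0\Rightarrow d\mu_\varsigma|_x=0\Rightarrow X_\varsigma|_x=0\Rightarrow\varsigma=0$: it relies on the gradient identity holding with the correct sign, on translating the vanishing of $d\mu_\varsigma$ back to that of $X_\varsigma$ through $\beta_2$, and on the local-freeness hypothesis that upgrades an infinitesimal fixed point to a genuine one.
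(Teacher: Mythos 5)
Your proof is correct and follows essentially the same route as the paper: the Cartan decomposition of a stabilizing element together with the monotonicity of $\mu_\varsigma$ along the flow of $Y_\varsigma=-g^{-1}d\mu_\varsigma$, then conjugation to spread the conclusion over $M_s$. The only (harmless) variation is that the paper first deduces finiteness of all stabilizers from the pointwise linear independence of the $Y_a$ established in Prop.~\ref{stableloci} and then exploits the resulting strict inequality $h'<0$ to get a contradiction, whereas you work with the non-strict inequality and recover $\varsigma=0$ from $d\mu_\varsigma|_x=0$ via $X_\varsigma=-\beta_2(d\mu_\varsigma)$ and local freeness of $G$ on $\mu^{-1}(0)$.
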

\begin{proof}By the well-known Cartan decomposition $G^\mathbb{C}=PG$ where $P$ is diffeomorphic to $\sqrt{-1}\mathfrak{g}$ via the exponential map, and by the fact that vector fields such as $Y_\varsigma$ generate the action of $\sqrt{-1}\mathfrak{g}$, the proof of Prop.~\ref{stableloci} already implies that the stabilizer of $x\in \mu^{-1}(0)$ is finite. Since each point $y\in M_s$ lies in a $G^\mathbb{C}$-orbit which intersects $\mu^{-1}(0)$ at some point $x$, the stabilizer of $y$ differs from the stabilizer of $x$ only by conjugation and thus is also finite.

For $x\in \mu^{-1}(0)$, obviously we have $G_x\subseteq G^\mathbb{C}_x$. 
Let $g\in G_x^\mathbb{C}\backslash G_x$. Due to Cartan decomposition, we can write it as $g=\exp(\sqrt{-1}\varsigma)k$, where $0\neq\varsigma \in \mathfrak{g}$ and $k\in G$. Consider the curve $s(t)=\exp(-\sqrt{-1}t\varsigma) kx$ ($t\in \mathbb{R}$) in $M_s$ and the function $h(t)=\mu_\varsigma(s(t))$. Note that
\[\frac{dh}{dt}=d\mu_\varsigma(Y_\varsigma)=-g(Y_\varsigma,Y_\varsigma).\]
Since $G^\mathbb{C}$ acts locally freely on $M_s$, $Y_\varsigma$ vanishes nowhere on $M_s$ and thus $\frac{dh}{dt}<0$. Now that $kx\in \mu^{-1}(0)$, we have
\[0=\mu_\varsigma(x)-\mu_\varsigma(kx)=h(1)-h(0)=\int_0^1 \frac{dh}{dt} dt<0,\]
which is a contradiction! Therefore, we must have $G_x^\mathbb{C}=G_x$. The rest of this proposition is a direct result of this equality.
\end{proof}
\begin{proposition}\label{Gorbit}
For any point $x\in \mu^{-1}(0)$, $G^\mathbb{C} x \cap \mu^{-1}(0)=G x$.
\end{proposition}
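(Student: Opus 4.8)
The plan is to prove the two inclusions separately; the content of the statement is the GIT-type fact that a $G^{\mathbb{C}}$-orbit meets $\mu^{-1}(0)$ in a single $G$-orbit. The inclusion $Gx\subseteq G^{\mathbb{C}}x\cap\mu^{-1}(0)$ is immediate: since $\mu$ is equivariant and the coadjoint action fixes $0$, the group $G$ preserves $\mu^{-1}(0)$, so $Gx\subseteq\mu^{-1}(0)$, while $Gx\subseteq G^{\mathbb{C}}x$ is trivial. Everything therefore rests on the reverse inclusion, for which I would reuse the monotonicity mechanism already exploited in Prop.~\ref{stablizer}.

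So I would take $y=gx$ with $g\in G^{\mathbb{C}}$ and $y\in\mu^{-1}(0)$, and invoke the Cartan decomposition $G^{\mathbb{C}}=PG$ to write $g=\exp(\sqrt{-1}\varsigma)k$ with $\varsigma\in\mathfrak{g}$ and $k\in G$. Setting $x_0=kx$, which still lies in $\mu^{-1}(0)$, we get $y=\exp(\sqrt{-1}\varsigma)x_0$, and it suffices to prove $\varsigma=0$, for then $g=k\in G$ and $y=kx\in Gx$. Consider the curve $s(t)=\exp(\sqrt{-1}t\varsigma)x_0$, which is the integral curve through $x_0$ of the vector field $Y_\varsigma$ generating the imaginary direction $\sqrt{-1}\varsigma$, together with the function $h(t)=\mu_\varsigma(s(t))$.

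The key step is the differential identity $h'(t)=d\mu_\varsigma(Y_\varsigma)=-g(Y_\varsigma,Y_\varsigma)\le 0$, which follows directly from $Y_\varsigma=-g^{-1}d\mu_\varsigma$ in Lemma~\ref{lemma}; thus $h$ is non-increasing. Since $x_0,y\in\mu^{-1}(0)$ force $h(0)=h(1)=0$, the function $h$ must be constant on $[0,1]$, whence $h'\equiv 0$ and so $Y_\varsigma\equiv 0$ along $s(t)$. In particular $Y_\varsigma$ vanishes at $x_0\in\mu^{-1}(0)\subseteq M_s$, and because $G^{\mathbb{C}}$ acts locally freely on $M_s$ (Prop.~\ref{stablizer}) its infinitesimal generator $Y_\varsigma$ can vanish at a point of $M_s$ only if $\varsigma=0$. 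This yields $\varsigma=0$ and closes the argument.

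I do not expect a genuinely new difficulty here beyond what Prop.~\ref{stablizer} already required: the whole content is the strict monotonicity of $\mu_\varsigma$ along the imaginary one-parameter subgroup $\exp(\sqrt{-1}t\varsigma)$, supplied by Lemma~\ref{lemma}. The two points that I would treat carefully are (i) that the entire connecting segment $s(t)$, $t\in[0,1]$, stays inside the open stable locus $M_s$, so that local freeness may legitimately be invoked to deduce $\varsigma=0$ from the vanishing of $Y_\varsigma$, and (ii) that both endpoints of $h$ indeed sit on $\mu^{-1}(0)$, which is exactly where the $G$-equivariance of $\mu$ enters through $x_0=kx$.
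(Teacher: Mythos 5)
Your proposal is correct and follows essentially the same route as the paper: reduce via the Cartan decomposition to $g=\exp(\sqrt{-1}\varsigma)$, and use the monotonicity $h'(t)=\mp g(Y_\varsigma,Y_\varsigma)$ of $h(t)=\mu_\varsigma(s(t))$ together with $h(0)=h(1)=0$ to force $Y_\varsigma=0$ along the curve and hence $\varsigma=0$ by local freeness. The only cosmetic difference is that the paper phrases the last step via the integral $\int_0^1 h'\,dt=0$ and passes from $Y_\varsigma=0$ to $X_\varsigma=0$ before invoking local freeness of $G$, whereas you invoke local freeness of $G^\mathbb{C}$ directly; both are valid.
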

\begin{proof}
It suffices to prove $G^\mathbb{C} x \cap \mu^{-1}(0) \subseteq Gx$. If $y=g x\in \mu^{-1}(0)$ for some $g\in G^\mathbb{C}$, without loss of generality we can assume $g=\exp(\sqrt{-1}\varsigma)$. Consider the similar curve $s(t)$ and the function $h(t)$ in the proof of Prop.~\ref{stablizer} ( $k=e$). Note that
\[0=\mu_\varsigma(y)-\mu_\varsigma(x)=h(1)-h(0)=-\int_0^1 g(Y_\varsigma,Y_\varsigma)\leq 0,\]
and equality holds iff $Y_\varsigma=0$ along $s(t)$. This immediately implies that $X_\varsigma=0$ along $s(t)$. Since $G$ acts locally freely on $M_s$, we must have $\varsigma=0$ and thus $y=x\in G x$.
\end{proof}
\begin{proposition}\label{Hauss}If $x,y\in \mu^{-1}(0)$ and $x\notin G y$, then there are disjoint $G^\mathbb{C}$-invariant neighbourhoods of $x$ and $y$ in $M$.
\end{proposition}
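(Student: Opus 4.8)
\emph{Plan.} The idea is to reduce the required separation to the Hausdorffness of the quotient $M_s/G^{\mathbb{C}}$ and then to identify that quotient with the compact Hausdorff space $\mu^{-1}(0)/G$. First, since $M_s$ is open and $G^{\mathbb{C}}$-invariant (Prop.~\ref{stableloci}) and contains $\mu^{-1}(0)\ni x,y$, any pair of disjoint $G^{\mathbb{C}}$-invariant open sets produced inside $M_s$ is automatically open in $M$; thus it suffices to separate $x$ and $y$ within $M_s$. Because $x\notin Gy$, Prop.~\ref{Gorbit} gives $G^{\mathbb{C}}x\neq G^{\mathbb{C}}y$, so $x$ and $y$ represent distinct points of $M_s/G^{\mathbb{C}}$, and the problem becomes that of showing these two points admit disjoint saturated open neighbourhoods.

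Next I would introduce the natural map $q:\mu^{-1}(0)/G\to M_s/G^{\mathbb{C}}$, $Gw\mapsto G^{\mathbb{C}}w$. It is continuous, and it is a bijection: surjectivity is the very definition $M_s=G^{\mathbb{C}}\mu^{-1}(0)$, while injectivity is exactly Prop.~\ref{Gorbit} (namely $G^{\mathbb{C}}w\cap\mu^{-1}(0)=Gw$). To upgrade $q$ to a homeomorphism I would realize $M_s$ as a tubular model around $\mu^{-1}(0)$ via the $G^{\mathbb{C}}$-equivariant map
\[ F:\ G^{\mathbb{C}}\times_G\mu^{-1}(0)\longrightarrow M_s,\qquad [g,w]\longmapsto g\cdot w, \]
which is well-defined and surjective, and is injective using Prop.~\ref{Gorbit} together with the stabilizer computation Prop.~\ref{stablizer}. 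The global $G^{\mathbb{C}}$-action needed here is available from Thm.~\ref{inte}, $M$ being compact. Quotienting $F$ by $G^{\mathbb{C}}$, which acts transitively on the base $G^{\mathbb{C}}/G$ with stabilizer $G$, recovers $q$ together with the identification $M_s/G^{\mathbb{C}}\cong\mu^{-1}(0)/G$.

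The heart of the argument is to check that $F$ is a local diffeomorphism, after which a $G^{\mathbb{C}}$-equivariant continuous bijection is automatically a diffeomorphism. At a point $[e,w]$ with $w\in\mu^{-1}(0)$ the domain has tangent space $\sqrt{-1}\mathfrak{g}\oplus T_w\mu^{-1}(0)$, and $dF$ sends $\sqrt{-1}\varsigma\mapsto Y_\varsigma(w)$ and is the identity on $T_w\mu^{-1}(0)$. By Prop.~\ref{stableloci} the $Y_\varsigma$ are linearly independent over $\mu^{-1}(0)$ and orthogonal to $T_w\mu^{-1}(0)$; since $\dim\mathfrak{g}=\mathrm{codim}\,\mu^{-1}(0)$, the differential $dF$ is an isomorphism at every $[e,w]$, and hence, by equivariance, at every point. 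Thus $F$ is a diffeomorphism, so $M_s/G^{\mathbb{C}}\cong\mu^{-1}(0)/G$ is compact Hausdorff. Finally, the distinct points $[x]\neq[y]$ are separated in $\mu^{-1}(0)/G$ by disjoint open sets, and their preimages under $M_s\to M_s/G^{\mathbb{C}}$ furnish the desired disjoint $G^{\mathbb{C}}$-invariant neighbourhoods of $x$ and $y$ in $M$.

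The step I expect to be the main obstacle is precisely the identification $M_s\cong G^{\mathbb{C}}\times_G\mu^{-1}(0)$: the differential computation rests on the transversality package of Prop.~\ref{stableloci} (the $Y_\varsigma$ spanning a complement to $T\mu^{-1}(0)$), while the global bijectivity of $F$ must be extracted carefully from Prop.~\ref{Gorbit} and Prop.~\ref{stablizer}. In the merely locally free case the finite stabilizers of Prop.~\ref{stablizer} force one to read $F$ as an orbifold diffeomorphism and to argue on finite-stabilizer slices rather than through a naive manifold statement; nonetheless $\mu^{-1}(0)/G$ remains compact Hausdorff, so the final separation goes through unchanged.
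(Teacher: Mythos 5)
Your argument is correct, but it takes a genuinely different route from the paper's. The paper follows Kirwan directly: it chooses a compact $G$-invariant neighbourhood $W$ of $x$ in $\mu^{-1}(0)$ avoiding $y$ and proves the quantitative estimate $\|\mu(\exp(\sqrt{-1}\varsigma)z)\|\geq\epsilon$ for all $z\in W$ and $\|\varsigma\|\geq 1$, where $\epsilon>0$ comes from compactness of the unit tube over $W$ and the estimate itself from the monotonicity $\frac{d}{dt}\mu_\varsigma(\exp(-\sqrt{-1}t\varsigma)z)=-g(Y_\varsigma,Y_\varsigma)$, i.e.\ from $Y_\varsigma$ being a gradient-like direction for $\mu_\varsigma$; a convergence argument then yields $y\notin\overline{\exp(\sqrt{-1}\mathfrak{g})W}$, and the separating sets are $\exp(\sqrt{-1}\mathfrak{g})W$ and the complement of its closure. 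You instead build the global tube model $F:G^{\mathbb{C}}\times_G\mu^{-1}(0)\to M_s$ first---injectivity from Prop.~\ref{Gorbit} and Prop.~\ref{stablizer}, the local diffeomorphism property from the transversality package of Prop.~\ref{stableloci} plus equivariance---and deduce that $M_s/G^{\mathbb{C}}\cong\mu^{-1}(0)/G$ is compact Hausdorff, then pull separating open sets back. This is sound and non-circular, since all your inputs precede Prop.~\ref{Hauss} in the paper; but it effectively proves Theorem~\ref{kempf} as a lemma, whereas the paper uses Prop.~\ref{Hauss} as an ingredient of Theorem~\ref{kempf} (continuous bijection from a compact space to a Hausdorff space). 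The estimate-based proof is the more robust of the two---it is the argument that survives when orbits fail to be closed and stability is more delicate---while your structural proof is shorter whenever the tube model is available. One small correction: in the merely locally free case no orbifold language is needed for the tube itself, because $G$ acts freely on $G^{\mathbb{C}}\times\mu^{-1}(0)$ through the first factor, so $G^{\mathbb{C}}\times_G\mu^{-1}(0)$ is a genuine manifold and $F$ a genuine diffeomorphism; only the base $\mu^{-1}(0)/G$ becomes an orbifold, which is irrelevant for the Hausdorffness you need.
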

\begin{proof}
We shall follow closely the ideas of Kirwan in \cite{Kir}. There is a compact $G$-invariant neighbourhood $W$ of $x$ in $\mu^{-1}(0)$ not containing $y$, because $G$ is compact and $x\notin G y$. By the proof of Prop.~\ref{stableloci}, $\exp (\sqrt{-1}\mathfrak{g}) W$ is a neighbourhood of $x\in M$. It suffices to prove that $y\notin \overline{\exp (\sqrt{-1}\mathfrak{g})W}$ for $\exp (\sqrt{-1}\mathfrak{g}) W$ and $M\setminus\overline{\exp (\sqrt{-1}\mathfrak{g})W}$ will be the neighbourhoods we need.

Choose an invariant metric on $\mathfrak{g}$ and use it to identify $\mathfrak{g}^*$ with $\mathfrak{\varsigma}$. We use $\|\varsigma\|$ to denote the norm of $\varsigma\in \mathfrak{g}$. Let $$U=\{\exp(\sqrt{-1}\varsigma) z|\varsigma\in \mathfrak{g}, \|\varsigma\|\leq 1, z\in W\},$$
and define
\[\epsilon=\inf\{(X_\varsigma,X_\varsigma)(w)|w\in U, \varsigma\in \mathfrak{g}, \|\varsigma\|=1\}.\]
Since $G^\mathbb{C}$ acts locally freely on $M_s$ and $U$ is compact, we have $\epsilon>0$.

Suppose $z\in W$ and $\varsigma\in \mathfrak{g}$ such that $\|\varsigma\|=1$. Consider as in the proof of Prop.~\ref{stablizer} the function $h(t)=\mu_\varsigma(\exp(-\sqrt{-1}t\varsigma) z)$, $t\in \mathbb{R}$. Then $h'(t)\leq 0$; in particular, $h'(t)\leq -\epsilon$ for $x\in [0,1]$.
Since $h(0)=0$, for any $t\geq 0$ we have that $h(t)\leq 0$; moreover, for any $t\geq 1$ we have that
\begin{eqnarray*}\|\mu(\exp(\sqrt{-1}t\varsigma) z)\|&=&\|\mu(\exp(\sqrt{-1}t\varsigma) z)\|\cdot \|\varsigma\|\geq |\mu_\varsigma(\exp(\sqrt{-1}t\varsigma) z)|\\
&=&|h(t)|\geq |h(1)|=-\int_0^1 h'(t)dt\geq \epsilon. \end{eqnarray*}
So we have obtained the conclusion that $\|\mu(\exp(\sqrt{-1}\varsigma)z)\|\geq \epsilon$ for any $z\in W$ and $\|\varsigma\|\geq 1$.

Now we can prove $y\notin \overline{\exp (\sqrt{-1}\mathfrak{g}) W}$. If it is not so, there would be sequences $\varsigma_n\in \mathfrak{g}$, $v_n\in W$ such that $\lim_{n\rightarrow \infty}\exp(\sqrt{-1}\varsigma_n) v_n=y$.
By continuity, we would have
\[\lim_{n\rightarrow \infty}\|\mu(\exp(\sqrt{-1}\varsigma_n) v_n)\|=\lim_{n\rightarrow \infty}\|\mu(y)\|=0.\]
Thus for $n$ sufficiently large,
\[\|\mu(\exp(\sqrt{-1}\varsigma_n) v_n)\|< \epsilon.\]
Therefore, for $n$ sufficiently large $n$, $\exp(\sqrt{-1}\varsigma_n) v_n\in U$ and thus $\|\varsigma_n\|\leq 1$. By compactness, we have subsequences $\varsigma_{n_l}$ and $v_{n_l}$, which converge to some $\varsigma$ ($\|\varsigma\|\leq 1$) and $v\in W$ respectively. Thus $y=\exp(\sqrt{-1}\varsigma) v\in W\subset U$. But we already have $y\notin \exp (\sqrt{-1}\mathfrak{g}) W \supseteq U $. A contradiction!
\end{proof}
After the above preparation, we can give a theorem of Kempf-Ness type.
\begin{theorem}\label{kempf}
If $G$ acts locally freely on $\mu^{-1}(0)$, then the natural inclusion $i: \mu^{-1}(0)\hookrightarrow M_s$ induces a diffeomorphism
$\mu^{-1}(0)/G\cong M_s/G^{\mathbb{C}}$.
\end{theorem}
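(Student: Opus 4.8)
The plan is to exhibit $M_s$ as the total space of the associated bundle $G^{\mathbb{C}}\times_G\mu^{-1}(0)$, so that the desired diffeomorphism falls out by passing to $G^{\mathbb{C}}$-quotients. First I would note that the inclusion $i$ is $G$-equivariant and $G\subset G^{\mathbb{C}}$, so $i$ descends to a map $\bar i:\mu^{-1}(0)/G\to M_s/G^{\mathbb{C}}$. Its surjectivity is immediate from the definition $M_s=G^{\mathbb{C}}\mu^{-1}(0)$, and its injectivity is exactly the content of Prop.~\ref{Gorbit}: if $x,y\in\mu^{-1}(0)$ lie in one $G^{\mathbb{C}}$-orbit then $y\in G^{\mathbb{C}}x\cap\mu^{-1}(0)=Gx$. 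Thus $\bar i$ is a continuous bijection, and the real work is to promote it to a diffeomorphism.

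To do this, consider the $G^{\mathbb{C}}$-equivariant action map
\[
F:G^{\mathbb{C}}\times_G\mu^{-1}(0)\longrightarrow M_s,\qquad [g,x]\mapsto gx .
\]
The $G^{\mathbb{C}}$-quotient of this associated bundle is canonically $\mu^{-1}(0)/G$, and under that identification the map induced by $F$ on quotients is precisely $\bar i$; hence it suffices to prove that $F$ itself is a $G^{\mathbb{C}}$-equivariant diffeomorphism. That $F$ is bijective follows from the same two propositions: surjectivity from the definition of $M_s$, and injectivity from Prop.~\ref{Gorbit} together with the equality $G^{\mathbb{C}}_x=G_x$ of Prop.~\ref{stablizer}, which guarantees that two representatives $[g,x],[g',x']$ with $gx=g'x'$ and $x,x'\in\mu^{-1}(0)$ are already identified inside the associated bundle.

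The technical heart is to show that $F$ is a local diffeomorphism. Since $F$ is $G^{\mathbb{C}}$-equivariant and every point of the domain is a $G^{\mathbb{C}}$-translate of one lying over $\mu^{-1}(0)$, it is enough to check that $dF$ is an isomorphism at a point $[e,x]$ with $x\in\mu^{-1}(0)$. Using the Cartan decomposition $G^{\mathbb{C}}=\exp(\sqrt{-1}\mathfrak{g})\,G$, the tangent space there splits into the image of $\sqrt{-1}\mathfrak{g}$ — spanned by the vectors $Y_\varsigma=J_+X_\varsigma^+$ that generate the imaginary directions — and $T_x\mu^{-1}(0)$. Here I would invoke the computation already carried out in the proof of Prop.~\ref{stableloci}: over $\mu^{-1}(0)$ the $Y_\varsigma$ are linearly independent and, because $g(Y_\varsigma,v)=-d\mu_\varsigma(v)=0$ for $v\in T_x\mu^{-1}(0)$, orthogonal to $T_x\mu^{-1}(0)$. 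A dimension count ($\dim M=\dim\mathfrak{g}+\dim\mu^{-1}(0)$) then shows $dF$ is surjective, hence an isomorphism, at every point of $\mu^{-1}(0)$, and equivariance spreads this over the whole domain. A bijective local diffeomorphism is a diffeomorphism, so $F$, and therefore $\bar i$, is a diffeomorphism; the Hausdorffness (and properness) needed to make $M_s/G^{\mathbb{C}}$ a genuine quotient is supplied by Prop.~\ref{Hauss}, or may simply be transported from $\mu^{-1}(0)/G$ through $F$.

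I expect the \emph{main obstacle} to be precisely this smoothness step rather than the set-theoretic bijection: one must be sure that the transverse directions produced by $\exp(\sqrt{-1}\mathfrak{g})$ fill out exactly the normal bundle of $\mu^{-1}(0)$ in $M_s$, which is where the orthogonality $Y_\varsigma\perp T\mu^{-1}(0)$ and the finiteness of the stabilizers (Prop.~\ref{stablizer}) are indispensable. In the merely locally free case the two quotients are orbifolds and the statement should be read as an isomorphism of orbifolds, the argument being unchanged since all stabilizers are finite and $G^{\mathbb{C}}_x=G_x$.
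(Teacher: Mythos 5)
Your argument is correct, but it organizes the smoothness step differently from the paper. The paper first obtains a homeomorphism by pure point-set topology (a continuous bijection from the compact space $\mu^{-1}(0)/G$ to the Hausdorff space $M_s/G^{\mathbb{C}}$, Hausdorffness coming from Prop.~\ref{Hauss}), and then upgrades this to a diffeomorphism locally via the Koszul--Palais slice theorem: a slice $\mathcal{S}\subset\mu^{-1}(0)$ for the $G$-action is used as a chart for $\mu^{-1}(0)/G$, the extended map $\Theta^{\mathbb{C}}:G^{\mathbb{C}}\times\mathcal{S}\to M_s$ exhibits the same $\mathcal{S}$ as a chart for $M_s/G^{\mathbb{C}}$, and $j$ becomes the identity in these charts. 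You instead globalize everything into the single action map $F:G^{\mathbb{C}}\times_G\mu^{-1}(0)\to M_s$ and prove it is a bijective local diffeomorphism, checking $dF$ along $\mu^{-1}(0)$ with exactly the linear-algebraic content of Prop.~\ref{stableloci} (linear independence of the $Y_\varsigma$ and their $g$-orthogonality to $T\mu^{-1}(0)$) and spreading the conclusion by equivariance; note that the paper's assertion that $\Theta^{\mathbb{C}}$ is ``obviously'' a diffeomorphism onto its image is, in substance, the same transversality fact you make explicit. Your route buys a slightly stronger statement---a $G^{\mathbb{C}}$-equivariant identification $M_s\cong G^{\mathbb{C}}\times_G\mu^{-1}(0)$, from which properness of the $G^{\mathbb{C}}$-action and the smooth (Hausdorff) structure on the quotient follow for free---and it absorbs the locally free case cleanly, since the associated bundle is a genuine manifold even when the finite stabilizers are nontrivial, whereas the paper defers that case to ``notational complexity.'' You do need, and correctly invoke, the stabilizer equality $G^{\mathbb{C}}_x=G_x$ of Prop.~\ref{stablizer} for injectivity of $F$ itself (the paper's $j$ only needs Prop.~\ref{Gorbit}); both arguments otherwise rest on the same preparatory propositions.
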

\begin{proof}Since $M_s=G^\mathbb{C} \mu^{-1}(0)$, the map $j: \mu^{-1}(0)/G\cong M_s/G^{\mathbb{C}}$ is obviously surjective. Prop.~ \ref{Gorbit} implies that $j$ is also injective. From Prop.~ \ref{Hauss}, we know $M_s/G^\mathbb{C}$ is a Hausdorff space. Therefore $j$ is a continuous bijection from a compact space to a Hausdorff space, and hence is a homeomorphism.

To see $j$ is actually a diffeomorphism, we only need to prove its smoothness locally. We only sketch a proof under the condition that $G$ acts freely on $\mu^{-1}(0)$. The case of locally free action will only involve some notational complexity.

By the famous Koszul-Palais slice theorem \cite[Appendix B]{GSK}, for $x\in \mu^{-1}(0)$, there is a slice $\mathcal{S}\subseteq \mu^{-1}(0)$ such that the map
\[\Theta: G\times \mathcal{S}\rightarrow \mu^{-1}(0),\quad (g, s)\mapsto gs\]
is a $G$-equivariant diffeomorphism from $G\times\mathcal{S}$ to its image in $\mu^{-1}(0)$. Thus $\mathcal{S}$ can be used as a coordinate chart around $[x]\in \mu^{-1}(0)/G$. $\Theta$ can be naturally $G^\mathbb{C}$-equivariantly extended:
\[\Theta^{\mathbb{C}}: G^{\mathbb{C}}\times \mathcal{S}\rightarrow M_s,\quad (g, s)\mapsto gs.\]
This is obviously a diffeomorphism from $G^{\mathbb{C}}\times \mathcal{S}$ to its image in $M_s$. Thus $\mathcal{S}$ can also be used as a coordinate chart around $[x]\in M_s/G^{\mathbb{C}}$. Therefore $j$ is the identity map in such a chart and consequently smooth.
\end{proof}

Following the convention of GIT, we denote $M_s/G^{\mathbb{C}}$ by $M//G^\mathbb{C}$. If additionally $G$ acts freely on $\mu^{-1}(0)$, then $M_s$ is open in $M$ and can be viewed as a principal $G^\mathbb{C}$-bundle over $M//G^\mathbb{C}$. Let $K$ be the subbundle of $E|_{M_s}$ generated by $\varphi(\mathfrak{g})$ and $\tilde{K}=K\oplus \mathbb{J}_1K$. Since $\tilde{K}$ is isotropic in $E|_{M_s}$ and the extended $G^\mathbb{C}$-action  preserves $\mathbb{J}_1$. We are in the situation of \cite[Thm.~5.2]{BCG1} and thus $\mathbb{J}_1$ descends to the quotient $M//G^\mathbb{C}$. Denote this generalized complex structure by $\mathbb{J}^\flat_1$.
\begin{theorem}\label{kempf2}
By the diffeomorphism $\mu^{-1}(0)/G\cong M//G^\mathbb{C}$, the generalized complex structure $\mathbb{J}_1^{red}$ coincides with $\mathbb{J}^\flat_1$.
\end{theorem}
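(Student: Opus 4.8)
The plan is to prove the statement fiberwise over a point $x\in\mu^{-1}(0)$ and then argue that the resulting identification is canonical enough to descend to the common quotient $Q:=\mu^{-1}(0)/G\cong M//G^{\mathbb{C}}$ supplied by Thm.~\ref{kempf}. The starting point is that, along $\mu^{-1}(0)$, the two isotropic subbundles driving the two reductions are explicitly linked by the identity $\mathbb{J}_1\varphi(\varsigma)=g^{-1}d\mu_\varsigma$ established earlier. Indeed $K_0=\textup{span}\{\varphi(\varsigma),\,d\mu_\varsigma\}$, whereas the subbundle generated by the extended $G^{\mathbb{C}}$-action is $\tilde{K}=K\oplus\mathbb{J}_1K=\textup{span}\{\varphi(\varsigma),\,\mathbb{J}_1\varphi(\varsigma)\}=\textup{span}\{\varphi(\varsigma),\,g^{-1}d\mu_\varsigma\}$. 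Since $\varphi(\varsigma)\in K_0$ and $g^{-1}d\mu_\varsigma=\mathcal{G}\,d\mu_\varsigma\in\mathcal{G}K_0$, one gets the crucial inclusion $\tilde{K}\subseteq K_0+\mathcal{G}K_0$ over $\mu^{-1}(0)$. It is exactly here, together with the isotropy of $\tilde{K}$, that the strong Hamiltonian hypothesis $S\equiv 0$ enters, via $(\mathbb{J}_1\varphi(\varsigma),\varphi(\zeta))=S(\varsigma,\zeta)=0$.

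Recall that the first reduction realizes its reduced Courant algebroid fiber as $V_1:=K_0^\bot\cap\mathcal{G}K_0^\bot=(K_0+\mathcal{G}K_0)^\bot$, on which $\mathbb{J}_1^{red}$ is simply $\mathbb{J}_1|_{V_1}$ (well defined because $V_1$ is $\mathcal{G}$- and $\mathbb{J}_2$-invariant, hence $\mathbb{J}_1=\mathcal{G}\mathbb{J}_2$-invariant). The inclusion $\tilde{K}\subseteq K_0+\mathcal{G}K_0$ at once yields $V_1\subseteq\tilde{K}^\bot$, so the composite $V_1\hookrightarrow\tilde{K}^\bot\twoheadrightarrow\tilde{K}^\bot/\tilde{K}$ makes sense. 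I would then check it is an isomorphism: the inner product is nondegenerate on $K_0+\mathcal{G}K_0$ (it is the split pairing of the isotropic $K_0$ against $\mathcal{G}K_0$ through the positive-definite form $(\mathcal{G}\cdot,\cdot)$, and $K_0\cap\mathcal{G}K_0=0$), so $V_1\cap(K_0+\mathcal{G}K_0)=0$ and in particular $V_1\cap\tilde{K}=0$, giving injectivity; a dimension count $\dim V_1=\dim E-4\dim\mathfrak{g}=\dim(\tilde{K}^\bot/\tilde{K})$ then forces surjectivity.

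To see that this isomorphism intertwines the two structures, note that $\tilde{K}^\bot$ and $V_1$ are both $\mathbb{J}_1$-invariant, and the map is the composition of the $\mathbb{J}_1$-equivariant inclusion with the $\mathbb{J}_1$-equivariant projection; hence it carries $\mathbb{J}_1|_{V_1}=\mathbb{J}_1^{red}$ onto the descended structure $\mathbb{J}_1^\flat$. This disposes of the pointwise statement. All the subbundles involved are invariant (under $G$ over $\mu^{-1}(0)$, under $G^{\mathbb{C}}$ over $M_s$), and along $\mu^{-1}(0)$ the $G^{\mathbb{C}}$-invariant data restrict to $G$-invariant data, so the canonical fiberwise isomorphism globalizes to a bundle isomorphism over $Q$.

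The genuinely technical point, and the step I expect to be the main obstacle, is to upgrade this bundle isomorphism to an isomorphism of \emph{reduced Courant algebroids}, i.e. to check that the two descended Courant brackets agree. Both brackets are induced from the single ambient $H$-twisted bracket acting on invariant sections, so one must reconcile the two pictures of reduced sections: a $G$-invariant section of $V_1$ over $\mu^{-1}(0)$ must be shown to extend uniquely to a $G^{\mathbb{C}}$-invariant section of $\tilde{K}^\bot$ over $M_s=G^{\mathbb{C}}\mu^{-1}(0)$ (exploiting that $G^{\mathbb{C}}$ acts freely there and is generated infinitesimally by the flows of the $Y_\varsigma$, by Prop.~\ref{stablizer}), after which the bracket computed modulo $K_0$ must be matched with the bracket computed modulo $\tilde{K}$. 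Once this bracket-compatibility is secured, the integrability of both reduced structures is already guaranteed by the two respective reduction theorems, so no further work is required and the equality $\mathbb{J}_1^{red}=\mathbb{J}_1^\flat$ follows.
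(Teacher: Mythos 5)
Your argument is correct and is essentially the paper's: both hinge on the identity $\mathbb{J}_1\varphi(\varsigma)=\mathcal{G}\,d\mu_\varsigma=g^{-1}d\mu_\varsigma=-Y_\varsigma$ along $\mu^{-1}(0)$, which the paper uses to show that the two model subbundles $K_0^\bot\cap\mathcal{G}K_0^\bot$ and $\tilde{K}^\bot\cap\mathcal{G}\tilde{K}^\bot$ are literally equal there by comparing their defining equations --- your isomorphism $V_1\cong\tilde{K}^\bot/\tilde{K}$ obtained from $\tilde{K}\subseteq K_0+\mathcal{G}K_0$ plus a dimension count is the same computation in a slightly different packaging. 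The bracket-compatibility step you single out as the main remaining obstacle is not treated in the paper's proof either; it is left implicit in the general Courant reduction theory of \cite{BCG1}.
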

\begin{proof}
By the general theory developed in \cite{BCG1} , the reduced Courant algebroid $E_{red}$ over $M//G^\mathbb{C}$ is $\frac{\tilde{K}^\bot}{\tilde{K}}/G^\mathbb{C}$. Since $\mathbb{J}_1$ preserves $\tilde{K}$, on $\frac{\tilde{K}^\bot}{\tilde{K}}$ there is a natural map $\tilde{\mathbb{J}}_1$, which gives rise to $\mathbb{J}^\flat_1$ after quotienting by $G^\mathbb{C}$. But $\mathbb{J}^\flat_1$ has another convenient description: Recall that $\mathcal{G}=-\mathbb{J}_1\mathbb{J}_2$ is the generalized metric. We have a natural isomorphism between $\frac{\tilde{K}^\bot}{\tilde{K}}$ and $\tilde{K}^\bot\cap \mathcal{G}\tilde{K}^\bot$, and by this identification $\tilde{\mathbb{J}}_1$ is precisely the restriction of $\mathbb{J}_1$ on $\tilde{K}^\bot\cap \mathcal{G}\tilde{K}^\bot$. To prove our theorem, we only need to check on $\mu^{-1}(0)$ it holds that
\begin{equation}K_0^\bot\cap \mathcal{G}K_0^\bot=\tilde{K}^\bot\cap \mathcal{G}\tilde{K}^\bot\label{iden},\end{equation}
where $K_0$ is the subbundle of $E|_{\mu^{-1}(0)}$ generated by $\varphi(\mathfrak{g})$ and $d\mu_\varsigma$, $\varsigma\in \mathfrak{g}$.
An element $Z+\eta$ in $K_0^\bot\cap \mathcal{G}K_0^\bot$ is characterized by the following equations:
\[\xi_\varsigma(Z)+\eta(X_\varsigma)=0,\quad d\mu_\varsigma(Z)=0,\quad g(X_\varsigma, Z)+g(\xi_\varsigma, \eta)=0,\quad g(\eta, d\mu_\varsigma)=0\]
for all $\varsigma\in \mathfrak{g}$
while an element $Z+\eta$ in $\tilde{K}^\bot\cap \mathcal{G}\tilde{K}^\bot$ is characterized by
\[\xi_\varsigma(Z)+\eta(X_\varsigma)=0,\quad \eta(Y_\varsigma)=0,\quad g(X_\varsigma, Z)+g(\xi_\varsigma, \eta)=0.\quad g(Z,Y_\varsigma)=0\]
for all $\varsigma\in \mathfrak{g}$.
Noting that $g^{-1}d\mu_\varsigma=-Y_\varsigma$, we find the two groups of equations on $\mu^{-1}(0)$ are actually the same. Therefore, Eq.~(\ref{iden}) does hold.
\end{proof}

Recall that a generalized complex structure has an important local invariant called type at a point, which is in fact the transverse complex dimension of the structure. Consider $\mathbb{J}_1$ and its reduction $\mathbb{J}_1^{red}$. We would like to know how the type $t(x)$ of a point $x\in M_s$ is related to the type $t([x])$ of its image $[x]\in M//G^\mathbb{C}$. Concerning this problem, there is a formula in \cite{LT}. In the following, we shall prove it in our context.\footnote{Lin-Tolman's formula applies to the more general case of Hamiltonian action. }
\begin{proposition}The two numbers $t([x])$ and $t(x)$ are related by
\begin{equation}t([x])=t(x)-\textup{dim}\mathfrak{g}+2\textup{dim}(\pi(L_1)\cap \pi(K_\mathbb{C}))_x.
\end{equation}
\end{proposition}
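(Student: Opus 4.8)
The plan is to compute the type of the reduced structure $\mathbb{J}_1^{red}$ directly from its $\sqrt{-1}$-eigenbundle. Recall that for a generalized complex structure with eigenbundle $L$, the type at a point equals the transverse complex dimension, which can be read off as $t = \tfrac{1}{2}\dim_\mathbb{C} M - \dim_\mathbb{C}\pi(L)$ in the ordinary tangent-bundle case; more invariantly, writing $\dim_\mathbb{R} M = 2n$, one has $\dim_\mathbb{C} \pi(L) = n - t$. The reduced eigenbundle is obtained by the Courant reduction formula recalled in $\S\ref{gr}$, namely $L_1^{red} = \frac{(L_1\cap K_{0,\mathbb{C}}^\bot) + K_{0,\mathbb{C}}}{K_{0,\mathbb{C}}}\big/G$, restricted to $\mu^{-1}(0)$. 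The strategy is therefore to track the dimensions of $\pi(L_1^{red})$ and the total rank of the reduced structure through this quotient, bookkeeping the contributions of $K_\mathbb{C}$, its $\mathbb{J}_1$-image, and the overlap $\pi(L_1)\cap\pi(K_\mathbb{C})$.

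\textbf{First} I would fix $x\in\mu^{-1}(0)$ and set up linear-algebra data on the fibre $E_x$: the subspace $K_x$ generated by $\varphi(\mathfrak{g})$, its $\mathbb{J}_1$-image, and the eigenspace $(L_1)_x$. Because $0$ is a regular value and $G$ acts locally freely, $\dim K_x = \dim\mathfrak{g}$, and by Prop.~\ref{stableloci} the vectors $Y_\varsigma = -\pi(\mathbb{J}_1\varphi(\varsigma))$ are independent, so $\tilde K_x = K_x \oplus \mathbb{J}_1 K_x$ has real dimension $2\dim\mathfrak{g}$. The reduction of $\mathbb{J}_1$ passes to $\tilde K^\bot/\tilde K$, so the reduced manifold has real dimension $\dim_\mathbb{R} M - 4\dim\mathfrak{g} + \dim\mathfrak{g}\cdot 2$; one must carefully reconcile the Courant reduction (which divides out $K_0$, involving the extra $d\mu_\varsigma$ directions) with the $\mathbb{J}_1$-reduction over $M//G^\mathbb{C}$ using the identification Eq.~(\ref{iden}) already established in Thm.~\ref{kempf2}. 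The point is that $\dim_\mathbb{R}(\mu^{-1}(0)/G) = \dim_\mathbb{R} M - 2\dim\mathfrak{g}$, so $n_{red} = n - \dim\mathfrak{g}$ where $2n = \dim_\mathbb{R} M$.

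\textbf{Next}, the core computation: I would show that $\dim_\mathbb{C}\pi(L_1^{red})_{[x]} = \dim_\mathbb{C}\pi(L_1)_x - \dim\mathfrak{g} + \dim_\mathbb{C}(\pi(L_1)\cap\pi(K_\mathbb{C}))_x$. The reduced projection $\pi(L_1^{red})$ lives in $\pi(\tilde K^\bot)\cap\dots$ modulo $\pi(\tilde K)$; passing from $\pi(L_1)_x$ to its image, one removes the directions spanned by $\pi(K_\mathbb{C})$ that already lie inside $\pi(L_1)$, whose complex dimension is exactly $\dim_\mathbb{C}(\pi(L_1)\cap\pi(K_\mathbb{C}))$, while the quotient by the full $\dim\mathfrak{g}$ complex directions of $\pi(\tilde K)_\mathbb{C}$ accounts for the $-\dim\mathfrak{g}$ term. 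Combining with $t([x]) = n_{red} - \dim_\mathbb{C}\pi(L_1^{red})_{[x]}$ and $t(x) = n - \dim_\mathbb{C}\pi(L_1)_x$, the stated formula should drop out after the cancellation $n_{red}-n = -\dim\mathfrak{g}$, leaving the claimed $t([x]) = t(x) - \dim\mathfrak{g} + 2\dim(\pi(L_1)\cap\pi(K_\mathbb{C}))_x$ (the factor $2$ arising because both $\pi(K_\mathbb{C})\cap\pi(L_1)$ and its role in the real-to-complex dimension count contribute).

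\textbf{The hard part} will be the careful dimension bookkeeping in the reduction quotient, specifically disentangling how $\pi(L_1)$ interacts with the \emph{isotropic} subbundle $\tilde K$ and its orthogonal complement when passing to $\tilde K^\bot/\tilde K$: the subtlety is that $\pi$ does not commute with taking orthogonals, so one cannot naively intersect projections. I expect the factor of $2$ in the overlap term to be the delicate point, reflecting that a complex direction in $\pi(L_1)\cap\pi(K_\mathbb{C})$ simultaneously fails to survive in $\pi(L_1^{red})$ \emph{and} was counted once in $\dim\mathfrak{g}$; the honest route is to choose an adapted complex basis of $(E_\mathbb{C})_x$ compatible with $\mathbb{J}_1$, $K_\mathbb{C}$, $\mathbb{J}_1 K_\mathbb{C}$, and $L_1$ simultaneously, and simply count. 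Once such a basis is fixed the verification is linear algebra, so no further integrability input beyond what Thm.~\ref{kempf2} supplies is needed.
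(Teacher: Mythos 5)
Your overall strategy (compute $\dim\pi(L_1^{red})$ at $x$ and compare with $\dim\pi(L_1)_x$) is the same as the paper's, but two of your explicit formulas are wrong, and they do not combine to give the stated identity. First, the type is the \emph{codimension} of $\pi(L)$ in the full complexified tangent space, i.e. $t=\dim_\mathbb{R} M-\dim_\mathbb{C}\pi(L)=2n-\dim_\mathbb{C}\pi(L)$, not $n-\dim_\mathbb{C}\pi(L)$: check the extremes, since a complex structure has $\pi(L)=T_{0,1}$ of dimension $n$ and type $n$, while a symplectic structure has $\pi(L)=T_\mathbb{C}$ of dimension $2n$ and type $0$. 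Second, your ``core computation'' $\dim\pi(L_1^{red})=\dim\pi(L_1)-\dim\mathfrak{g}+\dim(\pi(L_1)\cap\pi(K_\mathbb{C}))$ has the wrong sign and the wrong coefficient on the overlap term; the correct count is $\dim\pi(L_1^{red})=\dim\pi(L_1)-\dim\mathfrak{g}-2\dim(\pi(L_1)\cap\pi(K_\mathbb{C}))$. If one actually substitutes your two formulas into $t([x])=n_{red}-\dim\pi(L_1^{red})$ with $n_{red}=n-\dim\mathfrak{g}$, one obtains $t([x])=t(x)-\dim(\pi(L_1)\cap\pi(K_\mathbb{C}))$, which is not the proposition; the claim that the stated formula ``should drop out'' is therefore false as written, and the parenthetical explanation of where the factor $2$ comes from is not an argument.

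More importantly, the entire content of the proposition is precisely the dimension count you defer to ``choose an adapted basis and count.'' The paper carries this out in two steps, neither of which appears in your proposal: (i) $\dim\pi(L_1\cap\tilde{K}^\bot_\mathbb{C})=\dim\pi(L_1+K_\mathbb{C})-\dim\mathfrak{g}=\dim\pi(L_1)-\dim(\pi(L_1)\cap\pi(K_\mathbb{C}))$, which uses Lemma~3.1 of \cite{LT} (this is exactly the point you flag, that $\pi$ does not commute with orthogonal complements, so it genuinely needs a lemma); and (ii) $\dim\bigl(\pi(L_1\cap\tilde{K}^\bot_\mathbb{C})\cap\pi(\tilde{K}_\mathbb{C})\bigr)=\dim\mathfrak{g}+\dim(\pi(L_1)\cap\pi(K_\mathbb{C}))$, proved by splitting $\tilde{K}_\mathbb{C}=K_a\oplus\overline{K_a}$ with $K_a=L_1\cap\tilde{K}_\mathbb{C}$ and observing that an element of $\pi(\overline{K_a})$ has the explicit form $\sum_i c^i(X_{\varsigma_i}-\sqrt{-1}Y_{\varsigma_i})$, so that membership in $\pi(L_1)$ reduces to $\sum_i c^iX_{\varsigma_i}\in\pi(L_1)\cap\pi(K_\mathbb{C})$. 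It is step (ii) that produces the factor $2$ (the overlap is subtracted once inside $\dim\pi(L_1\cap\tilde{K}^\bot_\mathbb{C})$ and added once more when quotienting by $\pi(\tilde{K}_\mathbb{C})$). Without these two identities, and with the incorrect type convention, the proposal does not establish the result.
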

\begin{proof}Note that $\mathbb{J}_1^{red}$ at $[x]$ is actually modelled on the complex linear Dirac structure
\[\frac{L_1\cap \tilde{K}^\bot_\mathbb{C}+\tilde{K}_\mathbb{C}}{\tilde{K}_\mathbb{C}}\subset \tilde{K}^\bot_\mathbb{C}/\tilde{K}_\mathbb{C}\]
at $x$. Therefore, we have (to simplify notation, we omit the subscript $x$ in the following)
\begin{eqnarray*}T([x])&=&\frac{1}{2}\textup{dim}(\tilde{K}^\bot_\mathbb{C}/\tilde{K}_\mathbb{C})-\textup{dim}\pi(\frac{L_1\cap \tilde{K}^\bot_\mathbb{C}+\tilde{K}_\mathbb{C}}{\tilde{K}_\mathbb{C}})\\
&=& \textup{dim}M-2\textup{dim}\mathfrak{g}-\textup{dim}\frac{\pi(L_1\cap \tilde{K}^\bot_\mathbb{C})}{\pi(L_1\cap \tilde{K}_\mathbb{C}^\bot)\cap \pi(\tilde{K}_\mathbb{C})}
\\&=&\textup{dim}M-2\textup{dim}\mathfrak{g}-\textup{dim}\pi(L_1\cap \tilde{K}^\bot_\mathbb{C})+\textup{dim}(\pi(L_1\cap \tilde{K}_\mathbb{C}^\bot)\cap \pi(\tilde{K}_\mathbb{C})).\end{eqnarray*}
By Lemma 3.1 in \cite{LT},
\[\textup{dim}\pi(L_1\cap \tilde{K}^\bot_\mathbb{C})=\textup{dim}\pi(L_1+ K_\mathbb{C})-\textup{dim}\mathfrak{g}.\]
Since $\pi(L_1+ K_\mathbb{C})=\pi(L_1)+\pi(K_\mathbb{C})$, we have
\[\textup{dim}\pi(L_1+ K_\mathbb{C})=\textup{dim}\pi(L_1)+\textup{dim}\mathfrak{g}-\textup{dim}(\pi(L_1)\cap \pi(K_\mathbb{C})).\]
To prove the final result, we have to show
\[\textup{dim}(\pi(L_1\cap \tilde{K}_\mathbb{C}^\bot)\cap \pi(\tilde{K}_\mathbb{C}))=\textup{dim}\mathfrak{g}+\textup{dim}(\pi(L_1)\cap \pi(K_\mathbb{C})).\]
Let $K_a=L_1\cap\tilde{K}_\mathbb{C}$. Then $\tilde{K}_\mathbb{C}=K_a\oplus \overline{K_a}$ and $\pi(K_a)\subset \pi(L_1\cap \tilde{K}_\mathbb{C}^\bot)$. If $A\in K_a$ and $B\in \overline{K_a}$, for $\pi(A+B)$ to lie in $\pi(L_1\cap \tilde{K}_\mathbb{C}^\bot)$, the sufficient and necessary condition is $\pi(B)\in \pi(L_1\cap \tilde{K}_\mathbb{C}^\bot)$. But since $B\in \overline{K_a}\subset \overline{L_1}$, this happens iff $\pi(B)\in \pi(L_1)\cap \pi(\overline{L_1})$. Note that $\pi(B)=\sum_ic^i(X_{\varsigma_i}-\sqrt{-1}Y_{\varsigma_i})$ for a basic $\{\varsigma_i\}$ of $\mathfrak{g}$ and some constants $c^i$ and $\sum_ic^i(X_{\varsigma_i}+\sqrt{-1}Y_{\varsigma_i})$ lies in $\pi(L_1)$. Thus $\pi(B)\in \pi(L_1)\cap \pi(\overline{L_1})$ iff $\sum_ic^iX_{\varsigma_i}\in \pi(L_1)$. This completes the proof.
\end{proof}

If $G$ acts freely on $\mu^{-1}(0)$, $M_s$ is actually a generalized principal $G^\mathbb{C}$-bundle over $M//G^\mathbb{C}$ and $\mathbb{J}_1$ induces a generalized holomorphic structure over it.
\begin{theorem}\label{ghs}
$M_s$ viewed as a generalized principal $G^\mathbb{C}$-bundle on $M//G^\mathbb{C}$ is generalized holomorphic.
\end{theorem}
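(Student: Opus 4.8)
The plan is to write down the generalized holomorphic structure $\mathcal{A}$ explicitly and then check the conditions of Def.~\ref{ghp}. After complexification the generalized principal $G^\mathbb{C}$-bundle in question has base $M//G^\mathbb{C}$, total space $M_s$, Courant algebroid $E|_{M_s}$, and extended action $\varphi_\mathbb{C}$; the isotropic bundle playing the role of ``$K$'' in Def.~\ref{ghp} is $\tilde K=K\oplus\mathbb{J}_1 K$, generated over $\mathbb{R}$ by $\{X_\varsigma+\xi_\varsigma,\,Y_\varsigma\}$. Since $\varphi_\mathbb{C}(\sqrt{-1}\varsigma)=Y_\varsigma$ and $\mathbb{J}_1(X_\varsigma+\xi_\varsigma)=-Y_\varsigma$, the antiholomorphic generators form $K_a=L_1\cap\tilde K_\mathbb{C}$, spanned by $(X_\varsigma+\xi_\varsigma)+\sqrt{-1}Y_\varsigma$, exactly as in the proof of the type formula above. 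I would then propose
\[\mathcal{A}=L_1\cap\tilde K^\bot_\mathbb{C}\]
as the candidate structure.

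Verifying the ``almost'' conditions of Def.~\ref{ghp} is essentially bookkeeping on top of Thm.~\ref{kempf2}. Condition (i) holds since $K_a\subset L_1$ and $K_a\subset\tilde K_\mathbb{C}\subset\tilde K^\bot_\mathbb{C}$, so $K_a\subset\mathcal{A}\subset\tilde K^\bot_\mathbb{C}$. For (ii) I would observe that $\mathbb{J}_1\tilde K=\tilde K$ forces $\tilde K^\bot$ to be $\mathbb{J}_1$-invariant, whence the splitting $E_\mathbb{C}=L_1\oplus\overline{L_1}$ restricts to $\tilde K^\bot_\mathbb{C}=(L_1\cap\tilde K^\bot_\mathbb{C})\oplus(\overline{L_1}\cap\tilde K^\bot_\mathbb{C})=\mathcal{A}\oplus\overline{\mathcal{A}}$. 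For (iii), the reduction recipe of \S~\ref{gr} sends $\mathcal{A}$ to $\frac{L_1\cap\tilde K^\bot_\mathbb{C}+\tilde K_\mathbb{C}}{\tilde K_\mathbb{C}}$, which is precisely $L_1^{red}$, the $\sqrt{-1}$-eigenbundle of $\mathbb{J}_1^\flat$ by Thm.~\ref{kempf2}. The required $G^\mathbb{C}$-invariance and isotropy of $\mathcal{A}$ are immediate from $G^\mathbb{C}$ preserving $\mathbb{J}_1$ and $\tilde K$, and from $\mathcal{A}\subset L_1$.

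The real content is integrability, which I expect to be the main obstacle. For $A,B\in\Gamma(\mathcal{A})$ the involutivity of $L_1$ already yields $[A,B]_c\in L_1$, so it remains to show $[A,B]_c\perp\tilde K_\mathbb{C}$. Testing against a generator $W_0=\varphi_\mathbb{C}(\alpha)$, the Leibniz compatibility of the bracket with the pairing gives $([A,B]_c,W_0)=\pi(A)(B,W_0)-(B,[A,W_0]_c)$, and the first term vanishes since $(B,W_0)=0$. Using the non-skewness Eq.~(\ref{nonsk}) together with $(A,W_0)=0$, I get $[A,W_0]_c=-[W_0,A]_c$. The key point is that every generator preserves both $\mathbb{J}_1$ (so $[W_0,L_1]_c\subset L_1$) and $\tilde K^\bot$ (so $[W_0,\tilde K^\bot_\mathbb{C}]_c\subset\tilde K^\bot_\mathbb{C}$, which itself follows from preservation of $\tilde K$ via the same pairing identity); hence $[W_0,A]_c\in L_1\cap\tilde K^\bot_\mathbb{C}=\mathcal{A}\subset L_1$. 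As $B$ and $[W_0,A]_c$ then both lie in the isotropic $L_1$, their pairing vanishes and $([A,B]_c,W_0)=0$; $C^\infty$-linearity propagates this to all of $\tilde K_\mathbb{C}$, so $[A,B]_c\in\mathcal{A}$. The whole argument hinges on the two simultaneous invariances of the generators, which is ultimately where the strong Hamiltonian hypothesis enters, since it is what makes $\varphi_\mathbb{C}$ an honest extended action preserving $\mathbb{J}_1$.
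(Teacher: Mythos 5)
Your proposal is correct and follows essentially the same route as the paper: the same candidate $\mathcal{A}=L_1\cap\tilde K^\bot_\mathbb{C}$ with $K_a=L_1\cap\tilde K_\mathbb{C}$, and the same integrability argument via the pairing identity $(\,C,[A,B]_H)=\pi(A)(C,B)-([A,C]_H,B)$ combined with the fact that the generators of the extended $G^\mathbb{C}$-action preserve $L_1$, so that the result pairs to zero against the isotropic $L_1$. The only cosmetic differences are that you obtain $\mathcal{A}\oplus\overline{\mathcal{A}}=\tilde K^\bot_\mathbb{C}$ from $\mathbb{J}_1$-invariance of $\tilde K^\bot$ rather than the paper's rank count, and that you test the pairing on generators and invoke tensoriality, which is in fact slightly more careful than the paper's treatment of an arbitrary $C\in\tilde K_\mathbb{C}$.
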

\begin{proof}
Let $K_a:=L_1\cap \tilde{K}_\mathbb{C}$ and $\mathcal{A}:=L_1\cap \tilde{K}^\bot_\mathbb{C}$. Then
\[K_a\oplus \overline{K_a}=\tilde{K}_\mathbb{C},\quad K_a\subset \mathcal{A} \subset \tilde{K}^\bot_\mathbb{C},  \]
and the rank of $\mathcal{A}$ is $\textup{dim}M-\textup{dim} \mathfrak{g}$.

We now show $\mathcal{A}$ is a generalized holomorphic structure. Obviously, $\mathcal{A}\oplus \overline{\mathcal{A}}\subset \tilde{K}^\bot_\mathbb{C}$. This inclusion is actually equality due to dimensional reason. Therefore the algebraic conditions in Def.~\ref{ghp} are all satisfied.

Let $A, B\in \Gamma(\mathcal{A})$ and $C\in \tilde{K}_\mathbb{C}$. Since $\mathbb{J}_1$ is integrable, $[A, B]_H\in \Gamma(L_1)$. Additionally,
\[( C, [A, B]_H)=\pi(A)( C, B)-([A, C]_H, B)
= ([C, A]_H, B).\]
But since the extended action of $G^\mathbb{C}$ preserves $\mathbb{J}_1$, we must have $[C, A]_H\in\Gamma(L_1)$. Therefore, $( C, [A, B]_H)=0$ and $[A, B]_H\in \Gamma(\mathcal{A})$, i.e. $\mathcal{A}$ is involutive.
\end{proof}
When $G$ acts freely on $\mu^{-1}(0)$, $\mu^{-1}(0)$ is then a principal $G$-bundle over $\mu^{-1}(0)/G$. Since nontrivial generalized holomorphic structures are not easy to construct, we would like to know when a vector bundle associated to a complex representation of $G$ acquires a generalized holomorphic structure, without complexifying the $G$-action. Our total argument then implies the following
\begin{theorem}In the strong Hamiltonian case if $G$ acts freely on $\mu^{-1}(0)$, then any associated complex vector bundle of the principal $G$-bundle $\mu^{-1}(0)\rightarrow \mu^{-1}(0)/G$ has a natural generalized holomorphic structure.
\end{theorem}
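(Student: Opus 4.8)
The plan is to deduce the statement from the generalized holomorphicity of the complexified bundle $M_s$ (Theorem~\ref{ghs}) together with the Kempf-Ness identification (Theorem~\ref{kempf}), by matching associated bundles of the $G$-bundle $\mu^{-1}(0)$ with associated bundles of the $G^\mathbb{C}$-bundle $M_s$. First I would collect the ingredients already in hand: since $G$ acts freely on $\mu^{-1}(0)$, Proposition~\ref{stablizer} guarantees that $G^\mathbb{C}$ acts freely on $M_s$, so $M_s\to M//G^\mathbb{C}$ is a genuine principal $G^\mathbb{C}$-bundle which is generalized holomorphic by Theorem~\ref{ghs}, while Theorem~\ref{kempf} provides the diffeomorphism $j:\mu^{-1}(0)/G\cong M//G^\mathbb{C}$.

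Next, for a complex representation $\rho:G\to GL(V)$, I would invoke the standard fact that, $G^\mathbb{C}$ being the complexification of the compact group $G$, $\rho$ extends uniquely to a holomorphic representation $\rho^\mathbb{C}:G^\mathbb{C}\to GL(V)$. The key step is then to exhibit a natural isomorphism of complex vector bundles
\[\mu^{-1}(0)\times_G V\;\cong\;M_s\times_{G^\mathbb{C}}V\]
covering $j$. Via the inclusion $\iota:\mu^{-1}(0)\hookrightarrow M_s$ and $G\subset G^\mathbb{C}$, the assignment $[p,v]\mapsto[\iota(p),v]$ is well defined precisely because $\rho^\mathbb{C}$ restricts to $\rho$ on $G$; it is surjective on fibres since $M_s=G^\mathbb{C}\mu^{-1}(0)$ forces every $G^\mathbb{C}$-orbit to meet $\mu^{-1}(0)$, and injective on fibres because by Proposition~\ref{Gorbit} that intersection is a single $G$-orbit. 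Hence it is a fibrewise linear isomorphism, i.e. an isomorphism of bundles over $j$.

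Finally, by the result of \cite{Wang1} recalled after Definition~\ref{ghp}, the associated bundle $M_s\times_{G^\mathbb{C}}V$ of the holomorphic representation $\rho^\mathbb{C}$ carries a generalized holomorphic structure over $(M//G^\mathbb{C},\mathbb{J}^\flat_1)$; transporting it along the above isomorphism and through $j$ equips $\mu^{-1}(0)\times_G V$ with a generalized holomorphic structure over $(\mu^{-1}(0)/G,\mathbb{J}_1^{red})$, the two base structures coinciding by Theorem~\ref{kempf2}. The main obstacle is really just the well-definedness and fibrewise-isomorphism claim for the bundle map, where the Kempf-Ness theorem and the orbit description of Proposition~\ref{Gorbit} enter essentially; once this matching is in place the conclusion is a formal consequence of results already established, so no further analytic input is needed.
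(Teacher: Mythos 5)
Your proposal is correct and follows essentially the same route as the paper: complexify the representation, invoke Theorem~\ref{ghs} and the result of \cite{Wang1} on associated bundles of generalized holomorphic principal bundles, and identify $\mu^{-1}(0)\times_G V$ with $M_s\times_{G^\mathbb{C}}V$ over the Kempf--Ness diffeomorphism. The only difference is that you spell out the well-definedness and fibrewise bijectivity of that identification (via Proposition~\ref{Gorbit}), which the paper asserts in one line.
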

\begin{proof}
Let $(\phi, V)$ be a representation of $G$ in a complex vector space $V$ and $\phi_\mathbb{C}$ be the complexification of $\phi$. One simply complexifies the extended $G$-action on $M$. Then the (generalized) principal $G^\mathbb{C}$-bundle $M_s\rightarrow M//G^\mathbb{C}$ carries the natural generalized holomorphic structure. Therefore the associated vector bundle $V\times _{\phi_\mathbb{C}}M_s$ is a generalized holomorphic vector bundle. But as a vector bundle, $V\times_\phi \mu^{-1}(0)$ is the same as $V\times _{\phi_\mathbb{C}}M_s$.
\end{proof}
\section{Geometry on $G^\mathbb{C}$-orbits}\label{orbit}
As a concluding section, we consider the geometry of $G^\mathbb{C}$-orbits in $M_s$ of the former section, which arises from the ambient space.

Let $\mathfrak{i}:\mathcal{O}\hookrightarrow M_s$ be a $G^\mathbb{C}$-orbit in $M_s$. In the case of K$\ddot{a}$hler reduction, the K$\ddot{a}$hler structure on $M$ can be pulled back to $\mathcal{O}$ such that $\mathcal{O}$ is a $G$-invariant K$\ddot{a}$hler submanifold. Even more, the moment map $\mu$ can also be pulled-back to make the $G$-action on $\mathcal{O}$ Hamiltonian. We refer the interested readers to \cite{GS2} for details of this material.

At first glance, it would be expected that in the strong Hamiltonian generalized K$\ddot{a}$hler case, $\mathcal{O}$ may be a generalized K$\ddot{a}$hler submanifold in the sense that the two generalized complex structures $L_1$ and $L_2$, as complex Dirac structures, can both be pulled back onto $\mathcal{O}$ and induce a generalized K$\ddot{a}$hler structure on $\mathcal{O}$. However, this is indeed not the fact. As was observed in \cite{Vai}, a generalized K$\ddot{a}$hler submanifold of $M$ must be invariant under both $J_+$ and $J_-$. This cannot happen in general for the orbit $\mathcal{O}$ because $J_+X_\varsigma$ is not tangent to $\mathcal{O}$.

Astonishing is that $\mathcal{O}$ is actually again a Hamiltonian $G$-K$\ddot{a}$hler manifold in a natural way. In the following we describe how this structure arises.

$\mathcal{O}$ carries a natural complex structures $J_0$ defined by
\[J_0X_\varsigma=Y_\varsigma,\quad J_0Y_\varsigma=-X_\varsigma\]
at each $x\in \mathcal{O}$. $J_0$ is just induced from the canonical complex structure on $G^\mathbb{C}$. The metric on $\mathcal{O}$ cannot be the pull-back of $g$ since this is not Hermitian. We can define a new metric $g_0$ on $\mathcal{O}$ by letting
\[g_0(X_\varsigma, X_\zeta)=g_0(Y_\varsigma, Y_\zeta):=g(X_\varsigma, X_\zeta)+g(\xi_\varsigma, \xi_\zeta),\]
and
\[g_0(X_\varsigma, Y_\zeta)=g_0(Y_\zeta, X_\varsigma):=g(X_\varsigma, Y_\varsigma).\]
By definition $J_0$ and $g_0$ are obviously $G$-invariant. Additionally, the moment map $\mu$ can be pulled back to $\mathcal{O}$. By abuse of notation, we still use $\mu$ to denote this $G$-equivariant function from $\mathcal{O}$ to $\mathfrak{g}^*$.
\begin{theorem}\label{Ob}$(\mathcal{O}, J_0, g_0, \mu)$ is actually a Hamiltonian $G$-K$\ddot{a}$hler manifold.
\end{theorem}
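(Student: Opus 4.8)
The plan is to verify the four defining properties of a Hamiltonian $G$-K\"ahler manifold in turn: that $J_0$ is an integrable complex structure, that $g_0$ is a positive-definite $J_0$-Hermitian $G$-invariant metric, that the associated form $\omega_0:=g_0J_0$ is closed, and finally that $\mu$ is a moment map for the $G$-action on $(\mathcal{O},\omega_0)$. The integrability of $J_0$ is essentially free of charge: by Prop.~\ref{stablizer} the stabilizers in $G^{\mathbb{C}}$ are finite, so $\mathcal{O}\cong G^{\mathbb{C}}/\Gamma$ with $\Gamma$ discrete, and the assignment $\varsigma\mapsto X_\varsigma$, $\sqrt{-1}\varsigma\mapsto Y_\varsigma$ (which is injective by local freeness) identifies $T\mathcal{O}$ with $\mathfrak{g}_{\mathbb{C}}$ so that $J_0$ becomes multiplication by $\sqrt{-1}$. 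Thus $J_0$ is exactly the canonical complex structure of the complex Lie group $G^{\mathbb{C}}$ pushed down to the orbit, and is integrable. (Note that the very existence of $\mathcal{O}$ and of $J_0$ rests on the strong Hamiltonian hypothesis, via Thm.~\ref{inte}.)

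To handle the metric I would introduce the two \emph{chiral} bundle maps $\Psi_\pm:T\mathcal{O}\to TM|_{\mathcal{O}}$ covering the inclusion and determined by $\Psi_\pm(X_\varsigma+Y_\zeta)=X_\varsigma^\pm+Y_\zeta$, where $X_\varsigma^\pm=X_\varsigma\pm g^{-1}\xi_\varsigma$. A short check using $Y_\varsigma=J_+X_\varsigma^+=J_-X_\varsigma^-$ (Lemma~\ref{lemma}) shows each $\Psi_\pm$ is complex linear from $(T\mathcal{O},J_0)$ to $(TM,J_\pm)$. The key identity I would then establish is
\[g_0=\tfrac12\big(\Psi_+^{\,*}g+\Psi_-^{\,*}g\big),\]
whose proof rests on the isotropy relation $\xi_\varsigma(X_\zeta)+\xi_\zeta(X_\varsigma)=0$, equivalently $g(Y_\varsigma,Y_\zeta)=g(X_\varsigma,X_\zeta)+g(\xi_\varsigma,\xi_\zeta)$. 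From this presentation both nontrivial metric properties are immediate: each $\Psi_\pm^{\,*}g$ is $J_0$-Hermitian because $g$ is $J_\pm$-Hermitian and $\Psi_\pm$ is complex linear, so $g_0$ is Hermitian; and $g_0(v,v)=\tfrac12\big(|\Psi_+v|_g^2+|\Psi_-v|_g^2\big)\ge0$, with equality forcing $g^{-1}\xi_\varsigma=0$ and then $X_\varsigma+Y_\zeta=0$, hence $v=0$ by local freeness. I expect \emph{positive-definiteness} to be the first genuinely substantive point: in the frame $\{X_a,Y_a\}$ the matrix of $g_0$ has the shape $\left(\begin{array}{cc} P & Q \\ -Q & P \end{array}\right)$ with $P>0$ symmetric and $Q$ skew, and positivity is \emph{not} automatic from $P>0$ alone; the $\Psi_\pm$-decomposition is exactly what makes it transparent.

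Next I would verify the moment map identity $d\mu_\varsigma=-\iota_{X_\varsigma}\omega_0$ by evaluating both sides on the frame $\{X_\zeta,Y_\zeta\}$. Using $X_\varsigma=-\beta_2(d\mu_\varsigma)$ and $Y_\varsigma=-g^{-1}d\mu_\varsigma$ one computes $-\omega_0(X_\varsigma,X_\zeta)=\{\mu_\varsigma,\mu_\zeta\}_{\beta_2}=d\mu_\varsigma(X_\zeta)$, where the skew-symmetry of the $\beta_2$-bracket in $\varsigma,\zeta$ is used, while $-\omega_0(X_\varsigma,Y_\zeta)=-g(Y_\varsigma,Y_\zeta)=d\mu_\varsigma(Y_\zeta)$ again by the isotropy identity. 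Since $J_0$- and $G$-invariance exhibit $\omega_0$ as a $G$-invariant real $(1,1)$-form, Cartan's formula together with this identity gives $\iota_{X_\varsigma}d\omega_0=-d(\iota_{X_\varsigma}\omega_0)=d(d\mu_\varsigma)=0$.

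The step I expect to look hardest is closedness of $\omega_0$, but it dissolves through a type argument rather than a direct evaluation of $d\omega_0$ on triples of $Y$'s. Because $\omega_0$ is a real $(1,1)$-form and $J_0$ is integrable, $d\omega_0$ has no $(3,0)$- or $(0,3)$-component; in particular $d\omega_0(Z_\varsigma,Z_\zeta,Z_\eta)=0$ for the $(1,0)$-vectors $Z_\varsigma:=X_\varsigma-\sqrt{-1}Y_\varsigma$. The relation $\iota_{X_\varsigma}d\omega_0=0$ reads $\iota_{Z_\varsigma}d\omega_0=-\iota_{\bar{Z}_\varsigma}d\omega_0$, whence
\[d\omega_0(Z_\varsigma,Z_\zeta,\bar{Z}_\eta)=(\iota_{\bar{Z}_\eta}d\omega_0)(Z_\varsigma,Z_\zeta)=-(\iota_{Z_\eta}d\omega_0)(Z_\varsigma,Z_\zeta)=-d\omega_0(Z_\varsigma,Z_\zeta,Z_\eta)=0.\]
As the $Z_\varsigma$ span $T^{1,0}\mathcal{O}$, this kills the $(2,1)$-component of $d\omega_0$, and conjugation kills the $(1,2)$-component, so $d\omega_0=0$ and $(\mathcal{O},g_0,J_0)$ is K\"ahler. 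Combining this with the moment map identity and the manifest $G$-invariance of $(g_0,J_0,\mu)$ yields that $(\mathcal{O},J_0,g_0,\mu)$ is a Hamiltonian $G$-K\"ahler manifold. The only remaining points needing care are the bookkeeping for the finite $\Gamma$ in $\mathcal{O}\cong G^{\mathbb{C}}/\Gamma$ and the verification that $\{X_\varsigma,Y_\varsigma\}$ is a genuine frame, both of which follow from the local freeness established in Prop.~\ref{stableloci} and Prop.~\ref{stablizer}.
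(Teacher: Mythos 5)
Your argument is correct, but it departs from the paper's proof at two of the four steps, and the comparison is worth recording. For the compatibility of $g_0$ with $J_0$ the paper does not use your chiral maps $\Psi_\pm$: it embeds $T\mathcal{O}$ into the Courant algebroid by $\chi(X_\varsigma)=X_\varsigma+\xi_\varsigma$, $\chi(Y_\varsigma)=Y_\varsigma$, so that $J_0$ becomes $-\mathbb{J}_1$ and $g_0$ becomes $\mathcal{G}$ restricted to the subbundle $\mathcal{R}=\chi(T\mathcal{O})$, and compatibility (and, implicitly, positive-definiteness, since $\mathcal{G}$ is positive definite and $\chi$ is injective) is inherited from the generalized K$\ddot{a}$hler data. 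Your identity $g_0=\frac{1}{2}(\Psi_+^*g+\Psi_-^*g)$ is the bihermitian shadow of the same fact; its merit is that it makes positivity explicit, a point the paper passes over without comment. For the closedness of $\omega_0$ the paper performs exactly the computation you avoid: it evaluates $d\omega_0$ on all triples drawn from the frame $\{X_\varsigma,Y_\varsigma\}$ using the moment-map identity, the equivariance of $\mu$, and the bracket relations $[X_\varsigma,J_0X_\zeta]=J_0[X_\varsigma,X_\zeta]$ and $[J_0X_\varsigma,J_0X_\zeta]=-[X_\varsigma,X_\zeta]$. Your type argument is shorter and cleaner, but it takes the integrability of $J_0$ and the $G$-invariance of $\omega_0$ as external inputs; the paper only asserts these (they are standard for a locally free $G^{\mathbb{C}}$-orbit, as you observe), while its frame computation encodes the same information in the displayed bracket identities and is therefore self-contained at that step. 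The verification of $d\mu_\varsigma=-\iota_{X_\varsigma}\omega_0$ is carried out by the same two evaluations in both proofs.
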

\begin{proof}There is an easy way to see why $g_0$ is Hermitian w.r.t. $J_0$. Let $\mathcal{R}$ be the subbundle of $E|_{\mathcal{O}}$ generated by elements like $X_\varsigma+\xi_\varsigma$ and $Y_\varsigma$. $T\mathcal{O}$ can be identified with $\mathcal{R}$ through the bundle homomorphism $\chi: T\mathcal{O}\rightarrow \mathcal{R}$ defined by
\[\chi(X_\varsigma)=X_\varsigma+\xi_\varsigma,\quad \chi(Y_\varsigma)=Y_\varsigma.\]
 Then $J_0$ is nothing else but the restriction of $-\mathbb{J}_1$ on $\mathcal{R}$ and $g_0$ is just the restriction of the generalized metric $\mathcal{G}$ on $\mathcal{R}$. The compatibility of $J_0$ and $g_0$ is simply that of $\mathbb{J}_1$ and $\mathcal{G}$.

To see $(\mathcal{O}, J_0, g_0)$ is a K$\ddot{a}$hler manifold, we first derive an interesting formula:
\begin{equation}d\mu_\varsigma=-\iota_{X_\varsigma}\omega_0\label{mm}\end{equation}
where $\omega_0=g_0J_0$. This actually means the $G$-action is Hamiltonian if we have proved $\omega_0$ is closed. Eq.~(\ref{mm}) can be checked directly. In fact, by Eq.~(\ref{cent}) and definition,
\[\iota_{X_\zeta}d\mu_\varsigma=-g(Y_\varsigma,X_\zeta)=-g_0(Y_\varsigma,X_\zeta)=-g_0(J_0X_\varsigma,X_\zeta)=-\omega_0(X_\varsigma,X_\zeta),\]
and
\begin{eqnarray*}
\iota_{Y_\zeta}d\mu_\varsigma&=&-g(Y_\varsigma, Y_\zeta)=-g(X_\varsigma^+, X_\zeta^+)=-g_0(X_\varsigma, X_\zeta)=-g_0(J_0X_\varsigma, Y_\zeta)\\
&=&-\omega_0(X_\varsigma, Y_\zeta)
\end{eqnarray*}
as required.

To show $\omega_0$ is closed, we have to prove: (i) $d\omega_0(X_\varsigma, X_\zeta, X_\sigma)=0$; (ii) $d\omega_0(Y_\varsigma, X_\zeta, X_\sigma)=0$; (iii) $d\omega_0(Y_\varsigma, Y_\zeta, X_\sigma)=0$; (iv) $d\omega_0(Y_\varsigma, Y_\zeta, Y_\sigma)=0$. We will only prove the last two and the rest two, which can be checked similarly, will be left to the interested readers.

Proof of (iii).\begin{eqnarray*}
d\omega_0(Y_\varsigma, Y_\zeta, X_\sigma)&=&Y_\varsigma\omega_0(Y_\zeta, X_\sigma)-Y_\zeta\omega_0(Y_\varsigma,X_\sigma)+X_\sigma\omega_0(Y_\varsigma, Y_\zeta)-\omega_0([Y_\varsigma, Y_\zeta], X_\sigma)\\
&+&\omega_0([Y_\varsigma,X_\sigma], Y_\zeta)-\omega_0([Y_\zeta, X_\sigma], Y_\varsigma)\\
&=&Y_\varsigma Y_\zeta \mu_\sigma-Y_\zeta Y_\varsigma\mu_\sigma+ X_\sigma\omega_0(X_\varsigma, X_\zeta)-[Y_\varsigma, Y_\zeta]\mu_\sigma\\
&+&\omega_0([X_\varsigma,X_\sigma], X_\zeta)-\omega_0([X_\zeta, X_\sigma], X_\varsigma)\\
&=&X_\sigma\omega_0(X_\varsigma, X_\zeta)-\omega_0([X_\sigma, X_\varsigma], X_\zeta)-\omega_0(X_\varsigma, [X_\sigma, X_\zeta,])\\
&=&0
\end{eqnarray*}
where we have used Eq.~(\ref{mm}) and the fact that $[X_\varsigma, J_0X_\zeta]=J_0[X_\varsigma, X_\zeta]$.

Proof of (iv).\begin{eqnarray*}
d\omega_0(Y_\varsigma, Y_\zeta, Y_\sigma)&=&Y_\varsigma\omega_0(Y_\zeta, Y_\sigma)-Y_\zeta\omega_0(Y_\varsigma,Y_\sigma)+Y_\sigma\omega_0(Y_\varsigma, Y_\zeta)-\omega_0([Y_\varsigma, Y_\zeta], Y_\sigma)\\
&+&\omega_0([Y_\varsigma,Y_\sigma], Y_\zeta)-\omega_0([Y_\zeta, Y_\sigma], Y_\varsigma)\\
&=&Y_\varsigma\omega_0(X_\zeta, X_\sigma)-Y_\zeta\omega_0(X_\varsigma,X_\sigma)+Y_\sigma\omega_0(X_\varsigma, X_\zeta)+\omega_0([X_\varsigma, X_\zeta], Y_\sigma)\\
&-&\omega_0([X_\varsigma,X_\sigma], Y_\zeta)+\omega_0([X_\zeta, X_\sigma], Y_\varsigma)\\
&=&Y_\varsigma\mu_{[\zeta, \sigma]}-Y_\zeta\mu_{[\varsigma, \sigma]}+Y_\sigma\mu_{[\varsigma, \zeta]}-Y_\sigma\mu_{[\varsigma, \zeta]}+Y_\zeta\mu_{[\varsigma, \sigma]}-Y_\varsigma\mu_{[\zeta, \sigma]}\\
&=&0
\end{eqnarray*}
where we have also used the equality $[J_0X_\varsigma, J_0X_\zeta]=-[X_\varsigma, X_\zeta]$ and the equivariance of $\mu$.
\end{proof}

In \cite{GS2}, the discussion concerning the geometry on those $G^\mathbb{C}$-orbits is aimed at investigating stability conditions in equivariant K$\ddot{a}$hler geometry. The theorem above shows the same argument could apply to our more general setting. We will turn to the details of this "generalized complex stability" in future works.

\end{document}